\documentclass[journal]{IEEEtran}
\usepackage{amsmath,amssymb,amsfonts}
\usepackage{mathtools}
\usepackage{amsthm}
\usepackage[pdfpagelabels=true,linktocpage]{hyperref}
\usepackage[usenames,dvipsnames]{color}
\usepackage{xcolor}
\usepackage{xspace}
\usepackage{microtype}
\usepackage{todonotes}
\usepackage{colonequals}
 \usepackage{booktabs}
\usepackage{cite}

\usepackage{multirow}
\usepackage{multicol}

\usepackage{paralist}
\usepackage{diagbox}

\usepackage{url}
\usepackage{graphicx}
\usepackage{algorithm}
\usepackage{algpseudocode}

\usepackage{listings}
\usepackage{nicefrac}
\usepackage{tikz} 
\usepackage{pgfplots}
\usepackage{mdframed}
\pgfplotsset{compat=1.15}
\usepackage{tikzscale}
\usepackage{subcaption}

\lstset{
	basicstyle=\ttfamily,
    keywordstyle=\bfseries,
    showstringspaces=false,
    numbers=left,
    numberstyle=\tiny,
    morekeywords={}
}

\usetikzlibrary{arrows,decorations.pathmorphing,positioning,fit,trees,shapes,shadows,automata,calc,arrows.meta} 

\pagestyle{plain}

\usetikzlibrary{decorations.markings}
\tikzset{outline/.style args={#1}{%
  draw=#1,thick,fill=#1!50},initial text={}}

\tikzset{
  dot hidden/.style={},
  line hidden/.style={},
  dice hidden/.style={},
  dot color/.style={dot hidden/.append style={color=#1}},
  dot color/.default=black,
  line color/.style={line hidden/.append style={color=#1}},
  line color/.default=black,
  dice color/.style={dice hidden/.append style={color=#1,fill}},
  dice color/.default=white
}\def\dotsize{0.1}
\newcommand{\drawdie}[2][]{%
\begin{tikzpicture}[x=1em,y=1em,#1]
  \draw 	[thick, rounded corners=0.5,line hidden,dice hidden] (0,0) rectangle (1,1);
  \ifodd#2
    \fill[dot hidden] (0.5,0.5) circle (\dotsize);
  \fi
  \ifnum#2>1
  \fill[dot hidden] (0.25,0.25) circle (\dotsize);
  \fill[dot hidden] (0.75,0.75) circle (\dotsize);
  \ifnum#2>3
    \fill[dot hidden] (0.25,0.75) circle (\dotsize);
    \fill[dot hidden] (0.75,0.25) circle (\dotsize);
    \ifnum#2>5
      \fill[dot hidden] (0.75,0.5) circle (\dotsize);
      \fill[dot hidden] (0.25,0.5) circle (\dotsize);
    \fi
  \fi
\fi
\end{tikzpicture}
}

\tikzset{>=Stealth}
\usepackage{etoolbox}

\newtoggle{TR}
\toggletrue{TR}
\urlstyle{tt}

\title{\LARGE \bf Convex Optimization for Parameter Synthesis in MDPs}
\author{Murat Cubuktepe, Nils Jansen, Sebastian Junges, Joost-Pieter Katoen, Ufuk Topcu\thanks{M. Cubuktepe and U. Topcu are with the Department of Aerospace Engineering
		and Engineering Mechanics, Austin, USA. N. Jansen is with the Department of Software Science, Radboud University Nijmegen, Nijmegen, the Netherlands. S. Junges is with the Department of Electrical Engineering and Computer Sciences, University of California at Berkeley, Berkeley, USA. J.-P. Katoen is with the Departement of Computer Science, RWTH Aachen University, Aachen, Germany. email:(\{mcubuktepe,utopcu\}@utexas.edu, n.jansen@science.ru.nl, sjunges@berkeley.edu, katoen@cs.rwth-aachen.de).}
}

\newcommand{\ie}{i.e.\@\xspace}

\newcommand{\prophesy}{\textrm{PROPhESY}\xspace}
\newcommand{\prism}{\textrm{PRISM}\xspace}
\newcommand{\storm}{\textrm{Storm}\xspace}
\newcommand{\param}{\textrm{PARAM}\xspace}
\newcommand{\tool}[1]{\textrm{#1}\xspace}
 \newtheorem{definition}{Definition}
\newtheorem{problem}{Problem}
\newtheorem{theorem}{Theorem}
\newtheorem{remark}{Remark}

\newtheorem{proposition}{Proposition}
\newtheorem{example}{Example}
\newcommand{\TO}{TO}
\newcommand{\MO}{MO}
\newcommand{\highlight}[1]{\textcolor{blue!50!black}{\textbf{#1}}}

\newcommand{\epsgraph}{\varepsilon_\text{graph}}

\newcommand{\dtmc}{\mathcal{D}}

\newcommand{\p}{\ensuremath{\mathbb{P}}}
\newcommand{\pr}{\ensuremath{\mathrm{Pr}}}

\newcommand{\reachProp}[2]{\ensuremath{\p_{\leq #1}(\finally #2)}}
\newcommand{\reachProplT}{\ensuremath{\reachProp{\lambda}{T}}}

\newcommand{\reachPropSymbol}{\varphi_r}
\newcommand{\ereachPropSymbol}{\varphi_c}

\newcommand{\expRewProp}[2]{\ensuremath{\EV_{\leq #1}(\finally #2)}}
\newcommand{\expRewPropkT}{\ensuremath{\expRewProp{\kappa}{T}}}
\newcommand{\rewFunction}{\ensuremath{{c}}}

\newcommand{\finally}{\lozenge}

\makeatletter
\renewcommand\fs@ruled{\def\@fs@cfont{\bfseries}\let\@fs@capt\floatc@ruled
	\def\@fs@pre{{\color{black}\hrule height.8pt depth0pt \kern2pt}}%
	\def\@fs@post{{\color{black}\kern2pt\hrule\relax}}%
	\def\@fs@mid{{\kern2pt\color{black}\hrule\kern2pt}}%
	\let\@fs@iftopcapt\iftrue}
\makeatother



%

\newcommand{\R}{\mathbb{R}}


\newcommand{\Ireal}{[0,\, 1]\subseteq\mathbb{R}}  
\newcommand{\EV}{\ensuremath{\mathbb{E}}}

\newcommand{\Distr}{\mathit{Distr}}

\newcommand{\distDom}{X}

\newcommand{\distFunc}{\mu}
\newcommand{\distDomElem}{x}











\newcommand{\Paramvar}{\ensuremath{{V}}\xspace}        







\newcommand{\sinit}{s_{\mathit{I}}} 
\newcommand{\mdp}{\mathcal{M}}

\newcommand{\pMdpInit}[1][]{\ensuremath{\mdp{#1}=(S{#1},\,\sinit{#1},\Act,\Paramvar,\probmdp{#1})}}
\newcommand{\probmdp}{\mathcal{P}}

\newcommand{\sched}{\ensuremath{\sigma}}
\newcommand{\Sched}{\ensuremath{\mathit{Str}}}

\newcommand{\Act}{\ensuremath{\mathit{Act}}}
\newcommand{\ActS}{\ensuremath{\mathit{A}}}

\newcommand{\act}{\ensuremath{\alpha}}
\newcommand{\pmdp}{\ensuremath{\mathcal{P}}}




\newcommand{\added}[1]{{#1}}

\DeclareMathAlphabet{\mathpzc}{OT1}{pzc}{m}{it}
\def\presuper#1#2%
  {\mathop{}%
   \mathopen{\vphantom{#2}}^{#1}%
   \kern-\scriptspace%
   #2}

\newcommand{\Statey}{\Statex\hspace*{\ALG@thistlm}}

\IEEEoverridecommandlockouts
\setlength{\abovedisplayskip}{6.15pt plus 2.0pt minus 2.0pt}
\setlength{\belowdisplayskip}{6.15pt plus 2.0pt minus 2.0pt}
\setlength{\abovedisplayshortskip}{0pt}
\setlength{\belowdisplayshortskip}{0pt}

\begin{document}

\maketitle\begin{flushright}
	
\end{flushright}
\thispagestyle{plain}
\pagestyle{plain}
\begin{abstract}
Probabilistic model checking aims to prove whether a Markov decision process (MDP) satisfies a temporal logic specification.
The underlying methods rely on an often unrealistic assumption that the MDP is precisely known.
Consequently, parametric MDPs (pMDPs) extend MDPs with transition probabilities that are functions over unspecified parameters.
The parameter synthesis problem is to compute an instantiation of these unspecified parameters such that the resulting MDP satisfies the temporal logic specification.
We formulate the parameter synthesis problem as a quadratically constrained quadratic program (QCQP), which is nonconvex and is NP-hard to solve in general.
We develop two approaches that iteratively obtain locally optimal solutions.
The first approach exploits the so-called convex-concave procedure (CCP), and the second approach utilizes a sequential convex programming (SCP) method.
The techniques improve the runtime and scalability by multiple orders of magnitude compared to black-box CCP and SCP by merging ideas from convex optimization and probabilistic model checking.
We demonstrate the approaches on a satellite collision avoidance problem with hundreds of thousands of states and tens of thousands of parameters and their scalability on a wide range of commonly used benchmarks.
\end{abstract}

\section{Introduction}
\label{sec:introduction}

Markov decision processes (MDPs) are widely studied models for sequential decision-making~\cite{Put94}.
MDPs have been used, for example, in robotic applications~\cite{ding2014optimal}, aircraft collision avoidance systems~\cite{von2014analyzing}, and Mars robot
missions~\cite{nilsson2018toward}.
The formal verification of temporal logic specifications has been extensively studied for MDPs~\cite{BK08}. 
Such specifications are able to express properties such as ``the maximum probability of reaching a set of goal states without colliding with an obstacle is more than 99\%'' or ``the minimum expected time of reaching the target location is less than 10 seconds''.
A crucial, yet possibly unrealistic, assumption in probabilistic model checking is that the transition and cost functions of the MDP are exactly known.
However, these values are often estimated from data and may not be obtained exactly.

More general models express cost and probabilities as functions over parameters whose values are left unspecified~\cite{Daw04, lanotte, param_sttt}. 
Such \emph{parametric MDPs} (pMDPs) describe uncountable sets of MDPs. 
A well-defined instantiation of the parameters yields an \emph{instantiated, parameter-free} MDP.
Applications of pMDPs include adaptive software systems~\cite{calinescu-et-al-cacm-2012}, sensitivity analysis~\cite{su-et-al-icse-2016-qosevaluation}, optimizing randomized distributed algorithms~\cite{DBLP:conf/srds/AflakiVBKS17}, and synthesis of finite-memory strategies for partially observable MDPs (POMDPs)~\cite{DBLP:journals/corr/abs-1710-10294}.\looseness=-1

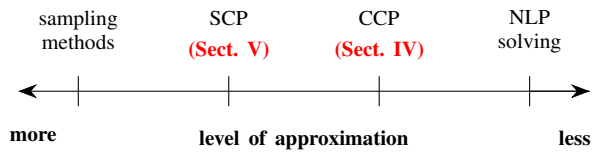
\begin{figure}
\centering
\begin{tikzpicture}[>={Stealth[scale=1.5]}]
	\draw (0,0) -- (6.8,0);
	\draw [-{Stealth[scale=1.5]}] (0,0) -- (-0.8,0);
	\draw [-{Stealth[scale=1.5]}] (6.0,0) -- (6.8,0);
	\draw (0,-.2) -- (0,.2);
	
	\draw (2.0,-.2) -- (2.0,.2);
	\draw (4.0,-.2) -- (4.0,.2);
	\draw (6.0,-.2) -- (6.0,.2);
	\node[anchor=north,text width=3.4cm,align=center] at (3.0,-0.4) {\textbf{\footnotesize{level of approximation}}};	
	
	\node[anchor=north,text width=1.4cm,align=center] at (-0.6,-0.4) {\textbf{\footnotesize{more}}};	
		\node[anchor=north,text width=1.4cm,align=center] at (6.6,-0.4) {\textbf{\footnotesize{less}}};	
	\node[anchor=north,text width=1.3cm,align=center] at (0,1.2) {\baselineskip=10.0pt\footnotesize{sampling methods}\par};
	\node[anchor=north,text width=1.3cm,align=center] at (2.0,1.2) {\footnotesize{SCP\\\textbf{\textcolor{red}{(Sect.~\ref{sec:scp})}}}};
	\node[anchor=north,text width=1.3cm,align=center] at (4.0,1.2) {\footnotesize{CCP\\\textbf{\textcolor{red}{(Sect.~\ref{sec:ccp})}}}};
	\node[anchor=north,text width=1.3cm,align=center] at (6.0,1.2) {\baselineskip=10.0pt\footnotesize{NLP solving}\par};
\end{tikzpicture}
	\caption{The spectrum of the solution approaches for the parameter synthesis problem. }	
\label{fig:spectrum}
\end{figure}

For a given finite-state pMDP, the \emph{parameter synthesis problem} is to compute a parameter instantiation such that the instantiated MDP satisfies a given temporal logic specification.
Solving this problem is ETR-complete, i.e., as hard as finding a root to a multivariate polynomial~\cite{winkler2019complexity}. 
Consequently, the problem is NP-hard and in PSPACE.
A straightforward approach to the parameter synthesis problem relies on an exact encoding into a nonlinear programming problem (NLP)~\cite{bartocci2011model} or a satisfiability modulo-theories formula~\cite{winkler2019complexity}. 
These approaches are, in general, limited to a few states and parameters.

To solve the parameter synthesis problem, it suffices to \emph{guess} a correct parameter instantiation. 
This insight has led to an adaptation of sampling-based techniques to the parameter synthesis problem~\cite{hahn2011synthesis}, most prominently particle swarm optimization (PSO)~\cite{chen2013model}.
After guessing an instantiation, one can efficiently verify the resulting associated parameter-free MDP to determine whether the specification is satisfied. 
These techniques can handle millions of states, but are restricted to a few parameters. 
Their performance degrades significantly with increasing parameters, e.g., with more than ten~\cite{hahn2011synthesis,chen2013model}.

\emph{How do these methods construct a good guess?}
One natural method is to \emph{solve} the original problem exactly. 
More precisely, exact methods translate the parameter synthesis problem into an equivalent nonlinear program. 
The advantage of these methods is that they only need one iteration. 
However, this iteration is, in general, very costly.
On the other hand, sampling-based methods completely disregard the model structure. 
Technically, they sample parameter instantiations from some prior, e.g., a uniform distribution from all samples. 
The advantage is that every iteration is very fast, but one may need a tremendous amount of iterations or samples.
These two approaches for the parameter synthesis problem are the extremal instances on the level of approximation, see Fig.~\ref{fig:spectrum}.


We develop methods that provide a trade-off between these two approaches by exploiting ideas from convex optimization.
More concretely, we approximate the NLP as a convex optimization problem and use the solution of this problem to find candidates of parameter instantiations. 
These instantiations are then verified using techniques from \emph{probabilistic model checking}~\cite{BK08}.
The resulting methods utilize the model structure of the pMDP, and fewer iterations are necessary compared to sampling-based methods, even though each iteration may take longer than the sampling-based methods.\looseness=-1\vspace{-0.1cm}
\subsection{Contributions}
We provide two solutions to the parameter synthesis problem.
In our first approach, we transform the NLP into a \emph{quadratically-constrained quadratic program} (QCQP). 
However, the resulting QCQP is nonconvex and is NP-hard to solve~\cite{alizadeh2003second,lobo1998applications}.
To obtain a locally optimal solution to this QCQP, we use the so-called convex-concave procedure (CCP)~\cite{lipp2016variations}.
To that end, we reformulate the nonconvex QCQP as a \emph{difference-of-convex} (DC) problem.
All constraints and the objective of a DC problem are a difference of two convex functions.
CCP computes a locally optimal solution to the resulting DC problem by convexifying it as a convex quadratic problem.
The resulting convex quadratic problem can be solved by state-of-the-art solvers such as \tool{Gurobi}~\cite{gurobi}.
We also integrate the CCP procedure with probabilistic model checking, which yields a speedup of multiple orders of magnitude compared to the existing CCP solvers.
This approach was published as a preliminary conference paper~\cite{cubuktepe2018synthesis}, and our presentation is partially based on \cite{Jun20}.


In our second approach, we exploit a sequential convex programming (SCP) method~\cite{yuan2015recent,mao2018successive,chen2013optimality} to solve the parameter synthesis problem.
We convexify the nonconvex QCQP into a \emph{linear program} (LP) by linearizing the nonconvex constraints around a previous solution.
Similar to CCP, SCP iteratively computes a locally optimal solution to the nonconvex QCQP.
However, unlike CCP, the linearization in SCP does not over approximate the functions in the constraints
Therefore, a feasible solution to the linearized problem in SCP may be infeasible to the parameter synthesis problem, unlike in CCP.
Existing SCP methods can ensure the correctness of the solution only when the starting point is feasible, which amounts to solving the parameter synthesis problem.
In this paper, we address the key critical shortcomings of the existing SCP methods.
Specifically, the solution obtained from SCP may not be feasible to the parameter synthesis problem due to approximation errors in linearization and potential numerical issues while solving the linearized problem.
First, we use so-called trust region constraints~\cite{yuan2015recent,mao2018successive,chen2013optimality} to ensure that the linearized problem accurately approximates the nonconvex QCQP.
Second, we integrate a probabilistic model checking step into SCP, similar to our first approach.
Instead, we use the values for each parameter after solving the linearized problem and model check the instantiated MDP.
We check whether the instantiation improves the probability of satisfying the specification compared to the previous instantiation.
We use these values as an input for the next iteration if the probability is improved.
Otherwise, we contract the radius of the trust region constraints and re-solve the linearized problem. 
We discuss the convergence properties of the proposed CCP and SCP method.\looseness=-1

We integrate the proposed CCP and SCP methods with a probabilistic model checker in the tool \prophesy~\cite{dehnert-et-al-cav-2015}. 
In particular, an extensive empirical evaluation on a broad range of benchmarks shows that the CCP and SCP method can solve the parameter synthesis problem for models with hundreds of thousands of states and tens of thousands of parameters as opposed to few parameters for the existing parameter synthesis tools. 
Thus, the resulting methods (1) solve multiple orders of magnitude larger problems compared to other parameter synthesis tools, (2) yield an improvement of multiple orders of magnitude in runtime compared to just using CCP and SCP as a black box, and (3) ensure the correctness of the solution and have favourable convergence properties.
%
\vspace{-0.15cm}
\subsection{Related work}
Traditionally, approaches for solving the parameter synthesis problems have been built around the notion of abstracting the parametric model into a solution function, similar to our approaches.
The solution function is the probability of satisfying the temporal logic specification as a function of the model parameters~\cite{Daw04,param_sttt,dehnert-et-al-cav-2015,DBLP:journals/tse/FilieriTG16}. 
The solution function can be exploited the probabilistic model checking tools \tool{PARAM}~\cite{param_sttt}, \tool{PRISM}~\cite{KNP11} and \tool{Storm}~\cite{DBLP:conf/cav/DehnertJK017} to solve the parameter synthesis problem. 
This function is exponentially large in the parameters, and solving the problem is again exponential in the number of parameters, making the whole approach doubly exponential~\cite{DBLP:journals/iandc/BaierHHJKK20}. 
Consequently, these approaches typically can handle millions of states but only a \emph{handful of parameters}. 
Moreover, these approaches require a fixed policy or has to introduce a parameter for every state/action-pair in the MDP.

Orthogonally, 
Quatmann et al.~\cite{quatmann2016parameter} address an alternative parameter synthesis problem which focuses on proving the absence of parameter instantiations. 
The method iteratively solves  simple stochastic games. 
Spel et al.~\cite{DBLP:conf/atva/SpelJK19} consider proving that the parameters behave monotonically, allowing for faster sampling-based approaches. 
However, this method is limited to a few parameters.
A recent survey on parameter synthesis in Markov models can be found in~\cite{junges2019parameter}.

Further variations of parameter synthesis, e.g., consider statistical guarantees for parameter synthesis, often with some prior on the parameter values~\cite{bortolussi2018bayesian,calinescu2016fact,cubuktepe2020scenario}.
These approaches cannot provide the absolute guarantees on an answer that the methods in this paper provide.

Parametric MDPs generalize interval models~\cite{DBLP:conf/tacas/SenVA06,DBLP:journals/ipl/ChenHK13}. 
Such interval models have also been considered with convex uncertainties~\cite{seshia_et_al_cav_13,hahn2017multi,wu2008reachability,chen2013complexity}.
However, the resulting problems with interval models are easier to solve due to the lack of dependencies (or couplings) between parameters in different states.

Finally, similar convex-optimization-based methods to our approaches have been used to synthesize finite-memory strategies for POMDPs~\cite{amato2006solving,amato2010optimizing}.
However, our techniques can solve POMDPs that are significantly larger than these methods.

\subsection{Organization}
We first provide the necessary preliminaries in Section~\ref{sec:preliminaries}. 
Then we provide the problem statement and formulate the problem as a nonconvex QCQP in Section~\ref{sec:problem}.
We develop the CCP method in Section~\ref{sec:ccp} and the SCP method in Section~\ref{sec:scp}.
Section~\ref{sec:convergence} discusses the convergence rate of the methods. 
Section~\ref{sec:experiments} shows numerical examples and demonstrates the scalability of the approaches.
We conclude our paper and discuss possible future directions in Section~\ref{sec:conclusion}.

\section{Preliminaries}
\label{sec:preliminaries}
A \emph{probability distribution} over a finite or countably infinite set $\distDom$ is a function $\distFunc\colon\distDom\rightarrow\Ireal$ with $\sum_{\distDomElem\in\distDom}\distFunc(\distDomElem)=1$. 
The set of all distributions on $\distDom$ is denoted by $\Distr(\distDom)$.
Let $V=\{v_1,\ldots,v_n\}$ be a finite set of \emph{variables} over the real numbers $\R$. The set of multivariate polynomials in $[0, 1]$ over $V$ is $\mathbb{Q}[V]$. An \emph{instantiation} for $V$ is a function $\mathbf{v}\colon V \rightarrow \R$.\looseness=-1

%
\begin{definition}[(Affine) pMDP]\label{def:pmdp}
A \emph{parametric Markov decision process (pMDP)} is a tuple $\pMdpInit$ with a finite set $S$ of \emph{states}, an \emph{initial state} $\sinit \in S$, a finite set $\Act$ of \emph{actions}, a finite set $\Paramvar$ of real-valued variables \emph{(parameters)} and a \emph{transition function} $\probmdp \colon S \times \Act \times S \rightarrow \mathbb{Q}[V]$.
A pMDP is \emph{affine} if $\probmdp(s,\act,s')$ is affine in $V$ for every $s,s'\in S$ and $\act\in \Act$.\looseness=-1
\end{definition}
%
%
%
For $s \in S$,  $\ActS(s) = \{\act \in \Act \mid \exists s'\in S.\,\probmdp(s,\,\act,\,s') \neq 0\}$ is the set of \emph{enabled} actions at $s$.
Without loss of generality, we require $\ActS(s) \neq \emptyset$ for $s\in S$.
If $|\ActS(s)| = 1$ for all $s \in S$, $\mdp$ is a \emph{parametric discrete-time Markov chain (pMC)}. 
MDPs can be equipped with a state--action \emph{cost function} $\rewFunction \colon S \times \Act \rightarrow \R_{\geq 0}$.

A pMDP $\mdp$ is a \emph{Markov decision process (MDP)} if the transition function yields \emph{well-defined} probability distributions, \ie, $\probmdp \colon S \times \Act \times S \rightarrow [0,1]$ and $\sum_{s'\in S}\probmdp(s,\act,s') = 1$ for all $s \in S$ and $\act \in \ActS(s)$. 
Applying an \emph{instantiation} $\mathbf{v}\colon V \rightarrow \R$ to a pMDP $\mdp$ yields an \emph{instantiated MDP} $\mdp[\mathbf{v}]$ by replacing each $f\in\mathbb{Q}[V]$ in $\mdp$ by $f[\mathbf{v}]$.
An instantiation $\mathbf{v}$ is \emph{well-defined} for $\mdp$ if 
the resulting model $\mdp[\mathbf{v}]$ is an MDP.

To define measures on
MDPs, nondeterministic choices are resolved by a so-called \emph{strategy} $\sched\colon S\rightarrow\Act$ with $\sched(s) \in \ActS(s)$.
The set of all strategies over $\mdp$ is $\Sched^\mdp$.
%
%
%
For the measures in this paper, memoryless deterministic strategies suffice~\cite{BK08}.
Applying a strategy to an MDP yields an \emph{induced Markov chain} where all nondeterminism is resolved.
%

For an MC $\dtmc$, the \emph{reachability specification} $\reachPropSymbol=\reachProplT$ asserts that a set $T \subseteq S$ of \emph{target states}  is reached with probability at most $\lambda\in [0,1]$.
If $\reachPropSymbol$ holds for $\dtmc$, we write $\dtmc\models\reachPropSymbol$.
Accordingly, for an \emph{expected cost specification}, $\ereachPropSymbol=\expRewProp{\kappa}{G}$, $\dtmc\models\ereachPropSymbol$ holds if and only if the expected cost of reaching a set $G \subseteq S$ is bounded by $\kappa \in \R$.
We use standard measures and definitions as in~\cite[Ch.\ 10]{BK08}.
We note that linear temporal logic specifications can be
reduced to reachability specifications, and we refer the reader to~\cite{BK08} for a detailed introduction.
%
%
%
%
An MDP $\mdp$ satisfies a specification $\varphi$, written $\mdp\models\varphi$, if and only if \emph{for all} strategies $\sched\in\Sched^\mdp$ it holds that $\mdp^\sched\models\varphi$. 

\section{Formal Problem Statement}
\label{sec:problem}
In this section, we state the parameter synthesis problem, which is to compute a parameter instantiation such that the instantiated MDP satisfies the given temporal logic specification.
We then discuss the nonlinear program formulation of the parameter synthesis problem, which forms the basis of our solution methods.
\begin{problem}[Parameter synthesis problem]\label{prob:pmdpsyn}
Given a pMDP $\pMdpInit$, and a reachability specification $\reachPropSymbol=\reachProplT$, 
 compute a well-defined
instantiation $\mathbf{v}\colon V \rightarrow \R$ for $\mdp$ such that $\mdp[\mathbf{v}]\models\reachPropSymbol$.
\end{problem}
Intuitively, we seek an instantiation of the parameters that satisfies $\reachPropSymbol$ for all possible strategies in the instantiated MDP.
We show necessary adaptions for an expected cost specification $\ereachPropSymbol=\expRewPropkT$ later.


For a given well-defined instantiation $\mathbf{v}$, Problem~\ref{prob:pmdpsyn} can be solved by verifying whether $\mdp[\mathbf{v}]\models\reachPropSymbol$. 
The standard formulation uses a linear program (LP) to minimize the probability $p_{\sinit}$ of reaching the target set $T$ from the initial state $\sinit$ while ensuring 
that this probability is realizable under any strategy~\cite[Ch.\ 10]{BK08}.
The straightforward extension of this approach to pMDPs to \emph{compute} a satisfiable instantiation $\mathbf{v}$ yields the following nonlinear program (NLP)~\cite{DBLP:conf/tacas/Cubuktepe0JKPPT17,cubuktepe2018synthesis} with the variables $p_s$ for $s \in S$, and the \emph{parameter variables} in $V$ in the transition function $\probmdp(s,\act,s')$ for $s,s' \in S$ and $\act \in \ActS$:
		\begin{align}
			\text{minimize} &\quad p_{\sinit}\label{eq:min_mdp}\\
			\text{subject to} &\nonumber \\
		p_s=1,\label{eq:targetprob_mdp}&\quad	\forall s\in T,	 & \\
	 \probmdp(s,\act,s')\geq 0,&\quad	\forall s,s'\in S\setminus T,\, \forall\act\in\ActS(s), &\label{eq:well-defined_probs_mdp}\\
	 \sum_{s'\in S}\probmdp(s,\act,s')=1,\label{eq:well-defined_probs_mdp1}	&\quad\forall s\in S\setminus T,\, 	\forall\act\in\ActS(s),	 \\
			\lambda \geq p_{\sinit},&  \label{eq:probthreshold_mdp}\\
	 p_s \geq \sum_{s'\in S}	\probmdp(s,\act,s')\cdot p_{s'}&\quad	\forall s\in S\setminus T,\,\forall \act\in\ActS(s).	\label{eq:probcomputation_mdp}
		\end{align}
For $s \in S$, the \emph{probability variable} $p_s\in[0,1]$ represents an upper bound of the probability of reaching target set $T\subseteq S$.
The \emph{parameters} in the set $V$ enter the NLP as part of the functions from $\mathbb{Q}[V]$ in the transition function $\probmdp$.
The constraint~\eqref{eq:probthreshold_mdp} ensures that the probability of reaching $T$ is below the threshold $\lambda$.
This constraint is optional for stating the problem, but we use the constraint in our methods for finding a parameter instantiation that satisfies the specification $\varphi$.
We minimize $p_{\sinit}$ to assign probability variables their minimal values with respect to the parameters $V$.

The probability of reaching a state in $T$ from $T$ is set to one~\eqref{eq:targetprob_mdp}.
The constraints~\eqref{eq:well-defined_probs_mdp} and~\eqref{eq:well-defined_probs_mdp1} ensure \emph{well-defined} transition probabilities.
Recall that $\probmdp(s,\act,s')$ is an affine function in $V$.
Therefore, the constraints~\eqref{eq:well-defined_probs_mdp} and~\eqref{eq:well-defined_probs_mdp1} only depend on the parameters in $V$, and they are affine in the parameters.
Constraint~\eqref{eq:probthreshold_mdp} is optional but necessary later, and ensures that the probability of reaching $T$ is below the threshold $\lambda$.
For each state $s\in S\setminus T$ and action $\act\in\ActS(s)$, the probability induced by the \emph{maximizing scheduler} is a lower bound to the probability variables $p_s$~\eqref{eq:probcomputation_mdp}.
To assign probability variables to their minimal values with respect to the parameters in $V$, $p_{\sinit}$ is minimized in the objective~\eqref{eq:min_mdp}.
We state the correctness of the NLP in Proposition~1.
%
\begin{proposition}
The NLP in \eqref{eq:min_mdp} -- \eqref{eq:probcomputation_mdp} computes the \emph{minimal probability} of reaching $T$ under a \emph{maximizing} strategy, and an instantiation $\mathbf{v}$ is feasible to the NLP if and only if $\mdp[\mathbf{v}]\models\reachPropSymbol$.
\end{proposition}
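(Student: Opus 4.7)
The plan is to reduce the claim, for any fixed well-defined instantiation $\mathbf{v}$, to the standard LP characterization of the maximum reachability probability in an MDP, and then lift this to the joint program over $\mathbf{v}$ and $p_s$. I will treat the two halves of the proposition in sequence.

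First, I would fix an arbitrary instantiation $\mathbf{v}\colon V\to\R$ and observe how the constraints decouple. Constraints \eqref{eq:well-defined_probs_mdp} and \eqref{eq:well-defined_probs_mdp1}, being affine in $V$, depend only on $\mathbf{v}$ and are satisfied exactly when $\mathbf{v}$ is well-defined for $\mdp$ in the sense of Section~\ref{sec:preliminaries}. Hence, restricting to such $\mathbf{v}$, the remaining system becomes a linear program in the variables $\{p_s\}_{s\in S}$ with fixed coefficients $\probmdp(s,\act,s')[\mathbf{v}]$: boundary equality \eqref{eq:targetprob_mdp}, Bellman-style inequalities \eqref{eq:probcomputation_mdp}, the optional threshold \eqref{eq:probthreshold_mdp}, and the objective \eqref{eq:min_mdp}.

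Next, I would invoke the standard LP characterization for maximum reachability in a finite MDP (see \cite[Ch.~10]{BK08}): the least pointwise solution of the system $p_s = 1$ for $s\in T$ and $p_s \geq \sum_{s'} \probmdp(s,\act,s')\cdot p_{s'}$ for $s\in S\setminus T$ and $\act \in \ActS(s)$ coincides with the map $s\mapsto \reachPrs{\max}{s}{T}$ in $\mdp[\mathbf{v}]$, and this least solution is precisely the one that minimizes $p_{\sinit}$ over the feasible set. Applied to $\mdp[\mathbf{v}]$, this gives the first half: the optimal value of the LP equals the minimal probability of reaching $T$ under a maximizing strategy, which establishes the first clause of the proposition.

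For the second half, I would argue both directions using this characterization. If $\mathbf{v}$ is feasible, then $\mathbf{v}$ is well-defined (by \eqref{eq:well-defined_probs_mdp}--\eqref{eq:well-defined_probs_mdp1}) and there exist $p_s$ satisfying \eqref{eq:targetprob_mdp},~\eqref{eq:probcomputation_mdp} with $p_{\sinit}\leq \lambda$; monotonicity of the Bellman operator yields $\reachPrs{\max}{\sinit}{T}\leq p_{\sinit}\leq \lambda$, hence $\mdp[\mathbf{v}]\models\reachPropSymbol$. Conversely, if $\mdp[\mathbf{v}]\models\reachPropSymbol$, the assignment $p_s := \reachPrs{\max}{s}{T}$ together with the well-defined $\mathbf{v}$ satisfies all constraints \eqref{eq:targetprob_mdp}--\eqref{eq:probcomputation_mdp}, so $\mathbf{v}$ is feasible.

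I do not anticipate a substantive obstacle here: the only subtlety worth emphasizing is that \eqref{eq:probcomputation_mdp} is a one-sided inequality rather than the Bellman fixed-point equality, so the equivalence hinges on the fact that minimizing $p_{\sinit}$ selects the least solution, which is exactly the maximum-reachability vector; this is precisely the content of the cited LP result, and everything else is bookkeeping.
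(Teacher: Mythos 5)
Your argument is correct and follows essentially the same route as the paper, whose proof simply cites the standard LP characterization of maximum reachability probabilities (\cite[Theorem~10.105]{BK08}, with the formal details deferred to \cite[Theorem~4.20]{Jun20}); you are filling in exactly that citation: decoupling the well-definedness constraints, reducing to the reachability LP for the instantiated MDP, and using the least-pre-fixed-point property of the Bellman inequalities for both directions of the equivalence. The only loose phrasing is the claim that the least solution \emph{is} the minimizer of $p_{\sinit}$ (it attains the minimum but need not be the unique optimal point since only the initial-state variable is minimized), which does not affect the conclusion.
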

\begin{proof}
The NLP in \eqref{eq:min_mdp} -- \eqref{eq:probcomputation_mdp} is an extension of the LP in~\cite[Theorem 10.105]{BK08}. We refer to~\cite[Theorem 4.20]{Jun20} for a formal proof.
\end{proof}

%


%
%
%
\begin{remark}[Graph-preserving instantiations]
	In the LP formulation for MDPs, states with probability $0$ to reach $T$ are determined via a preprocessing on the underlying graph, and their probability variables are set to zero to ensure that the variables encode the actual reachability probabilities.
	We do the same. 
	This preprocessing requires the underlying graph of the pMDP to be preserved under any valuation of the parameters. 
	Thus, as in~\cite{param_sttt,dehnert-et-al-cav-2015}, we consider only graph-preserving valuations. 
	Concretely, we exclude valuations $\mathbf{v}$ with $f[\mathbf{v}]=0$ for $f\in\probmdp(s,\act,s')$ for all $s,s'\in S$ and $\act\in\Act$.
We replace the set of constraints \eqref{eq:well-defined_probs_mdp} by%
\begin{align}
 	\forall s,s'\in S.\, \forall\act\in\ActS(s).	 &\quad \probmdp(s,\act,s')\geq \epsgraph,\label{eq:well-defined_eps}
 \end{align}
 where $\epsgraph>0$ is a small constant. 
 
\end{remark}

\added{We demonstrate the constraints for the NLP in~\eqref{eq:min_mdp} -- \eqref{eq:well-defined_eps} for a pMC by Example~\ref{ex:nlp}.}

\begin{example}\label{ex:nlp}
Consider the pMC in Fig.~\ref{fig:pmc_reform} with parameter set $V=\{v\}$, initial state $s_0$, and target set $T = \{s_3\}$. Let $\lambda$ be an arbitrary constant.
The NLP in~\eqref{eq:nlp-ex1} -- \eqref{eq:nlp-ex2} minimizes the probability of reaching $s_3$ from the initial state:
\begin{align}
			\textnormal{minimize} & \quad p_{s_0} \label{eq:nlp-ex1} \\
			\textnormal{subject to} &\quad p_{s_3}=1,\\
			&\quad \lambda \geq p_{s_0} \geq v\cdot p_{s_1}, \\
			&\quad p_{s_1} \geq (1-v)\cdot p_{s_2},\\
			&\quad  p_{s_2} \geq v\cdot p_{s_3},  \\
			&\quad 1-\epsgraph \geq v\geq \epsgraph. \label{eq:nlp-ex2}
		\end{align}
\end{example}
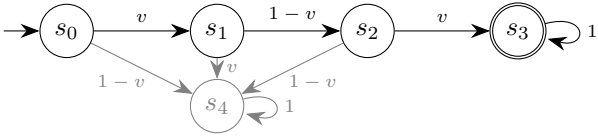
\begin{figure}[t]
	\centering
	\begin{tikzpicture}[scale=1, nodestyle/.style={draw,circle},baseline=(s0),>={Stealth[scale=1.5]}]

    \node [nodestyle,initial,-{Stealth[scale=1.5]}] (s0) at (0,0) {$s_0$};
    \node [nodestyle] (s1) [on grid, right=2cm of s0] {$s_1$};
    \node [nodestyle] (s2) [on grid, right=2cm of s1] {$s_2$};
    \node [nodestyle ,accepting] (s3) [on grid, right=2cm of s2] {$s_3$};
    \node [nodestyle, gray] (s4) [on grid, below=1cm of s1] {$s_4$};

    \draw[-{Stealth[scale=1.5]}] (s0) -- node [auto] {\scriptsize$v$} (s1);
    \draw[-{Stealth[scale=1.5]},gray] (s0) -- node [below,pos=0.3, yshift=-0.1cm] {\scriptsize$1-v$} (s4);

    \draw[-{Stealth[scale=1.5]}] (s1) -- node [auto] {\scriptsize$1-v$} (s2);
    \draw[-{Stealth[scale=1.5]},gray] (s1) -- node [auto] {\scriptsize$v$} (s4);
    
    \draw[-{Stealth[scale=1.5]}] (s2) -- node [auto] {\scriptsize$v$} (s3);
    \draw[-{Stealth[scale=1.5]},gray] (s2) -- node [below,pos=0.3, yshift=-0.1cm] {\scriptsize$1-v$} (s4);

    \draw(s3) edge[loop right,-{Stealth[scale=1.5]}] node [right] {\scriptsize$1$} (s3);
    \draw(s4) [gray,-{Stealth[scale=1.5]}] edge[loop right,-{Stealth[scale=1.5]}] node [right] {\scriptsize$1$} (s3);
    
\end{tikzpicture}	
	\caption{A pMC with a single parameter $v$.}
	\label{fig:pmc_reform}
\end{figure}
\paragraph{Expected cost specifications}
The NLP in \eqref{eq:min_mdp} -- \eqref{eq:well-defined_eps} considers reachability probabilities.
If we have instead an expected cost specification $\ereachPropSymbol=\expRewProp{\kappa}{G}$, we replace \eqref{eq:targetprob_mdp}, \eqref{eq:probthreshold_mdp}, and \eqref{eq:probcomputation_mdp} in the NLP by the following constraints:
\begin{align}
	p_s=0,&\;\forall s\in G.	  \label{eq:targetrew}\\
p_s\geq  c(s,\act) + \hspace{-0.08cm} \sum_{s'\in S}\hspace{-0.04cm}	\probmdp (s,\act,s')\cdot p_{s'},&\;	\forall s\in S\setminus G,\, \forall\act\in\ActS(s),	
\label{eq:rewcomputation}\raisetag{10pt}\\
\kappa \geq p_{\sinit}&\label{eq:strategyah:lambda}.
\end{align}
We have $p_s\in\R$, as these variables represent the expected cost to reach $G$. 
At $G$, the expected cost is set to zero \eqref{eq:targetrew}, and the actual expected cost for other states is a lower bound to $p_s$ \eqref{eq:rewcomputation}.
Finally, 
$p_{\sinit}$ is bounded by the threshold $\kappa$.

For the remainder of this paper, we restrict pMDPs to be affine, see Definition~\ref{def:pmdp}.
For an affine pMDP $\mdp$, the functions in the resulting NLP ~\eqref{eq:min_mdp} -- \eqref{eq:probthreshold_mdp} for pMDP synthesis are affine in $V$.
However, the functions in the constraints~\eqref{eq:probcomputation_mdp} are \emph{quadratic}, as a result of multiplying affine functions occurring in $\probmdp$ with the probability variables $p_{s'}$.
Therefore, the problem in~\eqref{eq:min_mdp} -- \eqref{eq:probcomputation_mdp} is a quadratically constrained quadratic program (QCQP)~\cite{boyd_convex_optimization} and is generally nonconvex~\cite{cubuktepe2018synthesis}.

\begin{remark}
In the literature, pMDPs and pMCs appearing in benchmarks and case-studies are almost exclusively affine. 
Furthermore, from a complexity-theoretic point of view, solving for these pMDPs is as hard as when considering the general pMDP definition~\cite{winkler2019complexity}.  
We refer to~\cite[Sec. 5.1.1]{Jun20} for a discussion of subclasses of pMDPs.
\end{remark}

	\section{Convex-Concave Procedure}
	\label{sec:ccp}
In this section, we present our solution based on the \emph{penalty convex-concave procedure} (CCP)~\cite{lipp2016variations}, which iteratively over-approximates a nonconvex optimization problem.
Specifically, we rewrite the quadratic functions in~\eqref{eq:probcomputation_mdp} as a \emph{sum of convex and concave functions} and linearize the concave functions.
The resulting convex problem can then be solved efficiently, and the process is iterated until a suitable solution is found.
However, the convergence conditions of CCP might be too conservative if the initial solution is infeasible.
Specifically, the obtained parameter instantiation and the instantiated MDP might solve the parameter synthesis problem, even though it cannot be certified by the convergence conditions of CCP.
Motivated by this fact, we integrate a model checking procedure into the CCP, which ensures the numerical stability of the solution, and certifies whether a computed instantiation solves the parameter synthesis problem.
We remark that the model checking procedure is also a critical part of our second approach, which is based on sequential convex programming.\looseness=-1





We depict the CCP approach for solving the nonconvex QCQP in~\eqref{eq:min_mdp} -- \eqref{eq:probcomputation_mdp} in Fig.~\ref{fig:ccploop}. 
The approach searches for a solution of the nonconvex QCQP by solving approximations of the QCQP in the form of convexified problems around some initial assignment for the parameters $\hat{\mathbf{v}}$ and probability variables $\hat{\mathbf{p}}$. 
After solving the convexified problem, we obtain the values of the penalty variables of this solution. \
If this penalty is zero, we have found a solution to the original nonconvex QCQP. 
Otherwise, we update (or guess) a new $\hat{\mathbf{v}}$ and  $\hat{\mathbf{p}}$ which we use to convexify the QCQP. 
We denote the obtained solution from the convexified problem for the parameter variables as $\mathbf{v}$ and for the probability variables as $\mathbf{p}$.
This loop may converge to a solution with positive values of the penalty variables, requiring restarting from another initial parameter instantiation. 
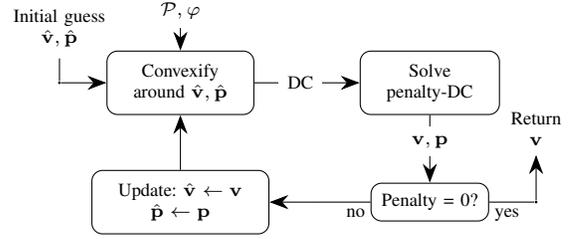
\begin{figure}
	\centering
	\begin{tikzpicture}[every node/.style={font=\scriptsize},>={Stealth[scale=1.5]}]
\node[draw,rectangle, inner sep=4pt, rounded corners] (conv) {\begin{tabular}{c}Convexify\\around $ \hat{\mathbf{v}}, \hat{\mathbf{p}}$\end{tabular}};
\node[circle,left=0.6cm of conv,inner sep=0pt, fill=black] (lconv) {};
\node[above=0.3cm of lconv] (start) {\begin{tabular}{c}Initial  guess\\ $ \hat{\mathbf{v}}, \hat{\mathbf{p}}$\end{tabular}};
\node[draw,rectangle, right=1.4cm of conv, inner sep=4pt, rounded corners] (dc) {\begin{tabular}{c}Solve \\ penalty-DC \end{tabular}};
\node[draw,rectangle, below=0.9cm of dc, inner sep=4pt, rounded corners] (check) {Penalty = 0?};
\node[draw,rectangle, below=0.73cm of conv, inner sep=4pt, rounded corners] (update) {\begin{tabular}{c}Update: $\hat{\mathbf{v}} \leftarrow  {\mathbf{v}}$\\
	$\hat{\mathbf{p}} \leftarrow  {\mathbf{p}}$\end{tabular}};
\node[above=0.3cm of conv] (input) {$\pmdp,\varphi$};
\node[circle,right=0.6cm of check,inner sep=0pt, fill=black] (solution-circ) {};
\node[above=0.6cm of solution-circ, inner sep=1pt] (solution) {\begin{tabular}{c}Return \\$\mathbf{v} $\end{tabular}};

\draw[->] (lconv) -- (conv);
\draw[-] (start) -- (lconv);
\draw[->] (input) -- (conv);
\draw[->] (conv) -- node[above] {} (dc);
\draw[->] (dc) -- node[above] {} (check);
\draw[-] (check) --  (solution-circ);
\draw[->] (solution-circ) --  (solution);
\draw[->] (check) --  (update);
\draw[->] (update) --  (conv);
\node[fill=white,rectangle,right=0.32cm of conv,align=center] (i) {DC};
\node[fill=none,rectangle,below right=-0.30cm and -0.05cm of check,align=center] (i) {yes};
\node[fill=none,rectangle,below left=-0.30cm and -0.05cm of check,align=center] (i) {no};
\node[fill=white,rectangle,below=0.15cm of dc,align=center] (j) {${\mathbf{v}}, {\mathbf{p}}$};

\end{tikzpicture}
	\caption{Penalty CCP loop. It starts with some assignment for parameters $\hat{\mathbf{v}}$ and probability variables $\hat{\mathbf{p}}$ and iteratively solves convex problems until the penalty variables converge to zero.}
	\label{fig:ccploop}
\end{figure}

\subsection{Constructing a convex approximation}
We start with the construction of the penalty DC problem, then discuss updating the variables.
For compact notation, let $h(s,\act,s')$ be the quadratic function in $V$ and $p_{s'}$, i.e., 
\begin{align*}
h(s,\act,s')=\probmdp(s,\act,s') \cdot  p_{s'}
\end{align*}
for any $s, s' \in S, \act \in \ActS(s)$ whose right-hand is part of the constraint~\eqref{eq:probcomputation_mdp} in the nonconvex QCQP.
Note that $h(s,\act,s)$ is an affine function in $V$ for affine MDPs.
We first write this quadratic function as a difference of two convex functions.
For simplicity, let \begin{equation*}\probmdp(s,\act,s')=2d\cdot y+c, \text{ and } p_{s'}=z,\end{equation*}
where $y$ is the parameter variable, $z$ is the probability variable, and $c \in \R, d \in \R_{+}$ are constants.
We equivalently rewrite each
bilinear function $h(s,\act,s')$ as \begin{align*}
& 2d \cdot y z+d(y^2+z^2)-d(y^2+z^2)+c\cdot z\\
 =\;&d(y+z)^2-d(y^2+z^2)+c\cdot z.
\end{align*}
The function $ d(y+z)^2+c\cdot z$ is a \emph{quadratic convex function} in $y$ and $z$. 
In the remainder, let $h_{\textrm{cvx}}(s,\act,s') \colonequals d(y+z)^2$.
We show an example of the resulting DC problem in Example~\ref{ex:dc}.

\begin{example}
	\label{ex:dc}
	Recall the pMC in Fig.~\ref{fig:pmc_reform} and the QCQP from Example~\ref{ex:nlp}.  All quadratic constraints in the example are nonconvex. 
	We construct a DC problem with $d=0.5$ as
	\begin{align*}
	\textnormal{minimize} &\quad p_{s_0} \\
	\textnormal{subject to}&\quad p_{s_3}=1,\\
	&\quad \lambda \geq p_{s_0} \geq d(v+p_{s_1})^2-d(v^2+p^2_{s_1}),\\
	&\quad p_{s_1} \geq p_{s_2}+d(v-p_{s_2})^2-d(v^2+p^2_{s_2}),\\
	&\quad p_{s_2} \geq d(v+p_{s_3})^2-d(v^2+p^2_{s_3}),\\
	&\quad 1-\epsgraph  \geq v \geq \epsgraph.
	\end{align*}
\end{example}

The remaining term $-d(y^2+z^2)$, denoted by $h_{\textrm{ccv}}(s,\act,s')$,  is concave, and we have to convexify it to obtain a convex QCQP.
We compute an affine approximation in the form of a linearization of the term around an assignment $\langle \hat{y},\hat{z}\rangle$ by  \begin{equation}d(\hat{y}^2+\hat{z}^2)+2\cdot d(\hat{y}^2+\hat{z}^2-y\hat{y}-z\hat{z}).\end{equation}
We convexify the bilinear function $2d \cdot yz$ with $c \in \R_{-}$ analogously.
We denote the above function by $h_{\textrm{aff}}(s,\act,s')$, which is affine in $y$ and $z$.
After the convexification step, we replace~\eqref{eq:probcomputation_mdp} by
\begin{align}
&p_s \geq  \sum_{s' \in S}  \big( h_{\textrm{cvx}}(s,\act,s') + h_{\textrm{aff}}(s,\act,s')\big),\nonumber\\
&\forall s \in S \setminus T,\, \forall \act \in \ActS(s), \label{Convex:constraint} 
\end{align}
which is convex in $\probmdp(s,\act,s')$, and $p_{s'}$.
The construction is similar for negative values of $d$.
The following proposition clarifies the relationship between the nonconvex and convexified QCQP.\looseness=-1

\begin{proposition}\label{prop:over_approximation}
	A feasible solution to the nonconvex QCQP in~\eqref{eq:min_mdp}--\eqref{eq:probcomputation_mdp} is also feasible to the convexified QCQP in~\eqref{eq:min_mdp}--\eqref{eq:probthreshold_mdp} together with~\eqref{Convex:constraint}.
\end{proposition}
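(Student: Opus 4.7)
The plan is to reduce the claim to a pointwise, summand-by-summand comparison of the right-hand sides of constraints \eqref{eq:probcomputation_mdp} and \eqref{Convex:constraint}. The two QCQPs share constraints \eqref{eq:min_mdp}--\eqref{eq:probthreshold_mdp} (together with the graph-preservation bound \eqref{eq:well-defined_eps}) verbatim, so any point that is feasible to the nonconvex QCQP immediately satisfies them in the convexified formulation. The only real work is to show that, state-by-state and action-by-action, the bilinear inequality \eqref{eq:probcomputation_mdp} can be converted into the convexified inequality \eqref{Convex:constraint} at the given feasible $(\mathbf{v}, \mathbf{p})$; because both right-hand sides decompose as sums over $s' \in S$, I would do this termwise in $(y, z) = (v, p_{s'})$.

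The central analytic ingredient is the first-order characterization of concavity that underpins the CCP convexification. Since $h_{\textrm{ccv}}(s, \act, s') = -d(y^2 + z^2)$ is concave, its tangent hyperplane at the linearization point $(\hat{y}, \hat{z})$ is a global upper bound on $h_{\textrm{ccv}}$, so $h_{\textrm{aff}}(y, z) \geq h_{\textrm{ccv}}(y, z)$ for every $(y, z)$, with equality at $(\hat{y}, \hat{z})$ itself. Combining this with the identity $h(s, \act, s') = h_{\textrm{cvx}}(s, \act, s') + h_{\textrm{ccv}}(s, \act, s') + c\,z$ (up to absorbing the affine $c\,z$ summand into $h_{\textrm{aff}}$, exactly as is done when writing \eqref{Convex:constraint}) rewrites the convexified right-hand side in terms of the bilinear one plus a controlled affine correction. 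Substituting this into the inequality provided by \eqref{eq:probcomputation_mdp} for each $s \in S \setminus T$ and $\act \in \ActS(s)$ and summing over $s'$ yields \eqref{Convex:constraint}. The case $c < 0$ flagged in the paragraph after the definition of $h_{\textrm{cvx}}$ is handled symmetrically using the alternative DC split $2d\,yz = -d(y - z)^2 + d(y^2 + z^2)$ and the analogous tangent inequality on the resulting concave quadratic.

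The main obstacle I anticipate is purely bookkeeping rather than anything deep: the direction of the tangent inequality for the concave piece, the sign of $d$, and whether the affine $c\,z$ contribution is absorbed into $h_{\textrm{aff}}$ or kept as a standalone term must all be tracked consistently, and the graph-preservation constraint \eqref{eq:well-defined_eps} must be invoked to ensure $(y, z)$ lies in the domain where the DC decomposition and its linearization are well-posed. Once those conventions are pinned down, the proof collapses to a single substitution of the concavity inequality into the bilinear feasibility constraint at $(\mathbf{v}, \mathbf{p})$, so no case distinction on the underlying structure of the pMDP $\mdp$ is needed beyond the split on the sign of $c$.
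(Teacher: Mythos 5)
Your strategy is the same as the paper's: reduce everything to the termwise tangent inequality for the concave part, $h_{\textrm{ccv}}(s,\act,s')\leq h_{\textrm{aff}}(s,\act,s')$, and observe that all other constraints coincide. But the final substitution step --- ``substituting this into the inequality provided by \eqref{eq:probcomputation_mdp} \ldots yields \eqref{Convex:constraint}'' --- does not go through, and the reason is the direction of the inequality. Since the first-order approximation of a concave function is a \emph{global over-estimator}, you have $h_{\textrm{cvx}}+h_{\textrm{aff}}\geq h_{\textrm{cvx}}+h_{\textrm{ccv}}=h$ pointwise, so the right-hand side of \eqref{Convex:constraint} is \emph{larger} than that of \eqref{eq:probcomputation_mdp}. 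Hence \eqref{Convex:constraint} is the \emph{stricter} constraint (the paper itself says so two paragraphs later), and from $p_s\geq\sum_{s'}h(s,\act,s')$ together with $\sum_{s'}(h_{\textrm{cvx}}+h_{\textrm{aff}})\geq\sum_{s'}h$ you cannot conclude $p_s\geq\sum_{s'}(h_{\textrm{cvx}}+h_{\textrm{aff}})$. What the tangent inequality actually delivers is the \emph{converse} implication: any point feasible for \eqref{Convex:constraint} is feasible for \eqref{eq:probcomputation_mdp}. That converse is also the direction that is actually consumed in the proof of Theorem~\ref{thm:main} (zero penalty $\Rightarrow$ feasible for the convexified constraints $\Rightarrow$ feasible for the original QCQP).

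To be fair, the paper's own one-line proof commits the same reversal in its last sentence, so you have faithfully reproduced the argument --- gap included. Two repairs are available. Either prove the converse statement (which is what CCP soundness needs, and which follows immediately from $h_{\textrm{ccv}}\leq h_{\textrm{aff}}$), or restrict the claim to the case where the convexification is performed \emph{around the feasible point itself}, i.e.\ $(\hat{y},\hat{z})=(y,z)$ for every term: tangency then gives $h_{\textrm{aff}}(\hat y,\hat z)=h_{\textrm{ccv}}(\hat y,\hat z)$, the two right-hand sides coincide at that point, and the stated direction holds. The latter reading is the standard CCP feasibility-preservation property (``if CCP is started at a feasible point, all iterates remain feasible'') and is presumably what is intended, but your write-up, like the paper's, asserts the implication for an arbitrary linearization point, where it is false. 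Your remaining bookkeeping remarks (sign of $d$, the $c<0$ split, absorbing $c\cdot z$ into the affine part) are fine and match the paper's construction.
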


\begin{proof}
	For any concave function $f$, its first-order approximation is a global over-estimator, i.e., for $\mathbf{v}, \mathbf{v}' \in V$:
	\begin{align}
	f(\mathbf{v})\leq f(\mathbf{v}') + \nabla f(\mathbf{v}')^{\top}(\mathbf{v}'-\mathbf{v}),
	\end{align}
	leading to $h_{\textrm{ccv}}(s,\act,s')\leq h_{\textrm{aff}}(s,\act,s')$. Therefore, by construction, any feasible solution that satisfies~\eqref{eq:probcomputation_mdp} also satisfies~\eqref{Convex:constraint}. The claim follows, as all other constraints coincide.\looseness=-1
	\end{proof}

The stricter set of constraints is often too strict and may not have any feasible solution.
We add a \emph{penalty variable} $k_{s}$ for all $s\in S \setminus T$ to all convexified constraints, which guarantees that the DC problem is feasible.
These variables allow us to measure the ``amount'' of infeasibility.
The larger the assigned value for a penalty variable, the larger the violation.
We then seek to minimize the violation of the original DC constraints by minimizing the sum of the penalty variables. 
The resulting convexified problem with the penalty variables is given by
\begin{align}
\text{minimize} &\quad p_{\sinit}+ \tau\sum_{\forall s\in S\setminus T}k_{s}\label{eq:min_qcqp_ccp}\\
\text{subject to}\nonumber\\
p_s=1, &\quad \forall s\in T,	\label{eq:targetprob_ccp}\\
 \probmdp(s,\act,s')\geq \epsgraph,&\quad\forall s,s'\in S\setminus T, \forall\act\in\ActS(s), \label{eq:well-defined_probs_ccp}\\
\sum_{s'\in S}\probmdp(s,\act,s')=1,&\quad\forall s\in S\setminus T,\forall \act \in \ActS(s),	 \label{eq:well-defined_probs_ccp1}\\
\lambda \geq p_{\sinit},&\quad 
\label{eq:probthreshold_ccp}\\
 k_{s} + p_s \geq   \sum_{s' \in S}\big( h_{\textrm{cvx}}(s,&\act,s')+h_{\textrm{aff}}(s,\act,s')\big),\label{eq:probcomputation_ccp}\\
	&\quad \forall s\in S\setminus T, \forall \act \in \ActS(s), \nonumber\\
k_{s} \geq 0,&\quad\forall s\in S\setminus T, \label{eq:slackvariable_ccp}
\end{align}
where $\tau > 0$ is a fixed \emph{penalty parameter}.
This convexified DC problem is, in fact, a convex QCQP. 
We use the constraint~\eqref{eq:probthreshold_ccp}, which is the same as constraint~\eqref{eq:probthreshold_mdp}, to ensure that we compute a solution at each iteration to minimize the violation of the convexified constraints, and guide the methods to a feasible solution with respect to the specification $\varphi$. We show an example of the convexified problem in Example~\ref{ex:convexified_dc}.
\begin{example}\label{ex:convexified_dc}
	Recall the pMC in Fig.~\ref{fig:pmc_reform} and the DC problem from Example~\ref{ex:dc}. 
	We introduce the penalty variables $k_{s_i}$ and assume a fixed $\tau$.
	We convexify around $\hat v$, and the probability values of each state, $\hat{p}_{s_0}, \hat{p}_{s_1}, \hat{p}_{s_2}$. The resulting convex problem with $d=0.5$ is given by
	\begin{align*}
	&\textnormal{minimize} \quad p_{s_0}+\tau\sum_{i=0}^2 k_{s_i} \\
	&	\textnormal{subject to}  \;\; p_{s_3}=1, \; \lambda \geq p_{s_0},\\
	&\;  k_{s_0}+p_{s_0} \geq d \cdot (v+p_{s_1})^2-2d\cdot(\hat{v}^2+\hat{p}^2_{s_1}-v\hat{v}-{p}_{s_1}\hat{p}_{s_1}),\\
	&\; k_{s_1}+p_{s_1} \geq p_{s_2}+ d \cdot (v-p_{s_2})^2-\\
	&\qquad\qquad\qquad 2d\cdot(\hat{v}^2+\hat{p}^2_{s_2}-v\hat{v}-{p}_{s_2}\hat{p}_{s_2}),\\
	&\;  k_{s_2}+p_{s_2} \geq d \cdot (v+p_{s_3})^2-2d\cdot(\hat{v}^2+\hat{p}^2_{s_3}-v\hat{v}-{p}_{s_3}\hat{p}_{s_3}),\\
	&\; 1-\epsgraph \geq v\geq \epsgraph, \\
	&\; k_{s_0}\geq 0, k_{s_1}\geq 0, k_{s_2} \geq 0. 
	\end{align*}
\end{example}
If all penalty variables are assigned to zero, we can terminate the algorithm immediately, which we state in Theorem~\ref{thm:main}.
\begin{theorem}
	\label{thm:main}
	A feasible solution of the convex DC problem in~\eqref{eq:min_qcqp_ccp} -- \eqref{eq:slackvariable_ccp}
	\begin{align*}
	\text{with}\qquad  \tau\sum_{\forall s\in S\setminus T}k_{s}=0		
	\end{align*}
	is a feasible solution to Problem 1.
\end{theorem}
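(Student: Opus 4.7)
The plan is to argue that a feasible solution with zero total penalty is in fact feasible to the original nonconvex QCQP in~\eqref{eq:min_mdp}--\eqref{eq:probcomputation_mdp}, and then invoke Proposition~1 to conclude that the extracted instantiation solves Problem~1.

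First, I would fix a feasible point $(\mathbf{v}, \mathbf{p}, \mathbf{k})$ of the convex DC problem~\eqref{eq:min_qcqp_ccp}--\eqref{eq:slackvariable_ccp} with $\tau \sum_{s \in S\setminus T} k_s = 0$. Since $\tau > 0$ and $k_s \geq 0$ by~\eqref{eq:slackvariable_ccp}, this forces $k_s = 0$ for every $s \in S\setminus T$. Consequently, constraint~\eqref{eq:probcomputation_ccp} collapses to the stricter form
\begin{align*}
p_s \geq \sum_{s' \in S} \bigl( h_{\textrm{cvx}}(s,\act,s') + h_{\textrm{aff}}(s,\act,s') \bigr),
\end{align*}
for all $s \in S\setminus T$ and $\act \in \ActS(s)$, which is exactly the convexified constraint~\eqref{Convex:constraint}.

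Next, I would recall the key inequality used in the proof of Proposition~\ref{prop:over_approximation}: since $h_{\textrm{aff}}(s,\act,s')$ is the first-order (affine) approximation of the concave function $h_{\textrm{ccv}}(s,\act,s')$ at the reference point $(\hat{\mathbf{v}}, \hat{\mathbf{p}})$, and any first-order expansion of a concave function is a global over-estimator, we have $h_{\textrm{aff}}(s,\act,s') \geq h_{\textrm{ccv}}(s,\act,s')$ pointwise. Therefore
\begin{align*}
p_s \;\geq\; \sum_{s'\in S} \bigl( h_{\textrm{cvx}} + h_{\textrm{aff}} \bigr) \;\geq\; \sum_{s' \in S} \bigl( h_{\textrm{cvx}} + h_{\textrm{ccv}} \bigr) \;=\; \sum_{s'\in S} \probmdp(s,\act,s')\cdot p_{s'},
\end{align*}
which is precisely the original nonconvex constraint~\eqref{eq:probcomputation_mdp}.

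Finally, I would observe that constraints~\eqref{eq:targetprob_ccp}--\eqref{eq:probthreshold_ccp} of the convex DC problem coincide syntactically with~\eqref{eq:targetprob_mdp}, \eqref{eq:well-defined_eps}, \eqref{eq:well-defined_probs_mdp1}, and \eqref{eq:probthreshold_mdp}, so $(\mathbf{v}, \mathbf{p})$ satisfies all constraints of the nonconvex QCQP including the threshold bound $\lambda \geq p_{\sinit}$. By Proposition~1, the well-defined instantiation $\mathbf{v}$ then satisfies $\mdp[\mathbf{v}] \models \reachPropSymbol$, so $\mathbf{v}$ solves Problem~1. The main subtlety, really the only one, is ensuring that $\mathbf{v}$ is well-defined; this is delivered directly by~\eqref{eq:well-defined_probs_ccp} and~\eqref{eq:well-defined_probs_ccp1} (together with the graph-preservation parameter $\epsgraph$), so no separate argument is needed.
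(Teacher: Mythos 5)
Your proposal is correct and follows essentially the same route as the paper's own proof of Theorem~\ref{thm:main}: zero total penalty forces every $k_s=0$, reducing~\eqref{eq:probcomputation_ccp} to~\eqref{Convex:constraint}, after which the global over-estimator property $h_{\textrm{aff}}\geq h_{\textrm{ccv}}$ (the content of Proposition~\ref{prop:over_approximation}) yields feasibility for the nonconvex QCQP, and Proposition~1 finishes the argument. The only difference is cosmetic: you unfold the over-estimator inequality explicitly (in the direction actually needed, namely convexified-feasible implies original-feasible) where the paper cites Proposition~\ref{prop:over_approximation}, and the paper additionally supplies an independent direct argument in Appendix~A that bounds the true reachability probabilities by the $p_s$ via a maximum-difference contradiction.
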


\begin{proof}
	Let $\mathbf{v}$ be a feasible solution as in the theorem statement. 
	With the given condition, the constraints in~\eqref{eq:min_qcqp_ccp} -- \eqref{eq:slackvariable_ccp} reduce to the constraints in~\eqref{eq:min_mdp}--\eqref{eq:probthreshold_mdp} and~\eqref{Convex:constraint}.
	From Proposition~\ref{prop:over_approximation}, we conclude that the feasible solution is also feasible to the QCQP in~\eqref{eq:min_mdp}--\eqref{eq:probcomputation_mdp}.
	Finally, using Proposition~\ref{prob:pmdpsyn}, we conclude that the feasible solution is also a solution to Problem~1.
	We provide another proof in the Appendix~A.
	\end{proof}

%
We update the penalty parameter $\tau$ by $\mu + \tau$ for a $\mu > 0$, if any of the penalty variables are positive.
The penalty parameter is updated until an upper limit for $\tau_{\mathit{max}}$ is reached to avoid numerical problems.
Then, we convexify the $h_{\textrm{ccv}}$ functions around the current (not feasible) solution and solve the resulting convex problem. 
We repeat this procedure until we find a feasible solution, or it converges. 
The procedure may be restarted with a different initial value if the procedure converges to an infeasible solution.

\subsection{Efficiency Improvements in CCP}\label{sec:efficiencyimprovements_ccp}

In this section, we consider problem-specific implementation details and efficiency improvements for the proposed CCP method.
Specifically, we provide details on the encoding, and how we update the variables between the iterations compared to the standard CCP methods.

\paragraph{Algorithmic Improvements} We list three key improvements that we make as opposed to a naive implementation of the approaches.
\begin{inparaenum}[(1)]
	\item 	
	We efficiently precompute the states $s \in S$ that reach target states with probability $0$ or $1$ with graph algorithms~\cite{BK08}, which simplifies the QCQP in~\eqref{eq:min_mdp} -- \eqref{eq:probcomputation_mdp}.
	\item 
	Often, all instantiations with admissible parameter values yield well-defined MDPs. We verify this property via an easy preprocessing.
	Then, we omit the constraints~\eqref{eq:well-defined_probs_mdp} for the well-definedness.
	\item Parts of the encoding are untouched over multiple CCP iterations.
	Instead of rebuilding the encoding, we only update constraints that contain iteration-dependent values. 
	The update is based on a preprocessed representation of the model.
	The improvement is two-fold: We spend less time constructing the encoding, and the solver reuses previous results, making solving up to three times faster.\looseness=-1
\end{inparaenum}

\paragraph{Integrating Model Checking with CCP}

\begin{figure}
	\centering
	\begin{tikzpicture}[every node/.style={font=\scriptsize},>={Stealth[scale=1.5]}]
%

\node[draw,rectangle, inner sep=4pt, rounded corners] (conv) {\begin{tabular}{c}Convexify\\around $\hat{\mathbf{v}}, \hat{\mathbf{p}}$\end{tabular}};
\node[circle,left=0.6cm of conv,inner sep=0pt, fill=black] (lconv) {};
\node[above=0.3cm of lconv] (start) {\begin{tabular}{c}initial \\ guess $\hat{\mathbf{v}}, \hat{\mathbf{p}}$\end{tabular}};
\node[draw,rectangle, right=2.0cm of conv, inner sep=4pt, rounded corners] (dc) {\begin{tabular}{c}Solve \\ penalty-DC \end{tabular}};
\node[draw,rectangle, below=0.9cm of dc, inner sep=4pt, rounded corners] (check) {\begin{tabular}{c}Model check\\ the MDP:\\$\mdp[{\mathbf{v}}] \models \varphi$\end{tabular}};
\node[draw,rectangle, below=1.06cm of conv, inner sep=4pt, rounded corners] (update) {\begin{tabular}{c}Update: $\hat{\mathbf{v}}\leftarrow {\mathbf{v}}$,\\$\hat{\mathbf{p}}  \leftarrow \textrm{res}(\mathbf{p})$\end{tabular}};
\node[above=0.3cm of conv] (input) {$\pmdp,\varphi$};
\node[circle,right=0.6cm of check,inner sep=0pt, fill=black] (solution-circ) {};
\node[above=0.6cm of solution-circ, inner sep=1pt] (solution) {\begin{tabular}{c}Return \\$\mathbf{v} $\end{tabular}};

\draw[->] (lconv) -- (conv);
\draw[-] (start) -- (lconv);
\draw[->] (input) -- (conv);
\draw[->] (conv) -- node[above] {} (dc);
\draw[->] (dc) -- node[above] {} (check);
\draw[-] (check) --  (solution-circ);
\draw[->] (solution-circ) --  (solution);
\draw[->] (check) --  (update);
\draw[->] (update) --  (conv);
\node[fill=white,rectangle,right=0.65cm of conv,align=center] (i) {DC};
\node[fill=white,rectangle,inner sep=2pt,below left=-0.77cm and 0.30cm of check,align=center] (j) {$\textrm{res}(\mathbf{p})$};
\node[fill=white,rectangle,below=0.15cm of dc,align=center] (j) {${\mathbf{v}}$};
\node[fill=none,rectangle,below right=-0.40cm and -0.05cm of check,align=center] (i) {yes};
\node[fill=none,rectangle,below left=-0.40cm and -0.05cm of check,align=center] (i) {no};

\end{tikzpicture}
	\caption{CCP with model checking in the loop. 
		After each iteration, we model check the instantiated MDP to determine whether the specification is satisfied. 
		If the MDP satisfies the specification, we return the parameter instantiation. 
		Otherwise, we update CCP with the model checking results and convexify around the new solution until the procedure converges or we find a feasible solution.}
	\label{fig:ccpwithmc}
\end{figure}
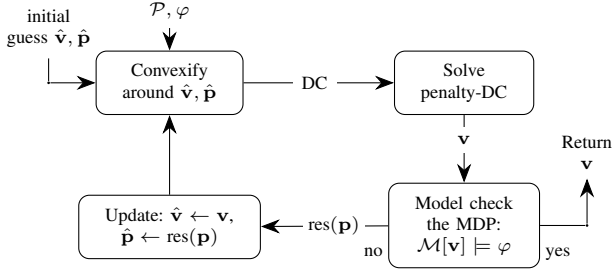

The first and foremost assumption, backed by numerical examples, is that model checking single instantiations of a pMDP is much faster than an iteration of the CCP method.
Consequently, we slightly change the loop from Fig.~\ref{fig:ccploop} to the loop in Fig.~\ref{fig:ccpwithmc}, making two important changes.
The first change is that we no longer check the penalty of a solution.
Instead, we use the values $\hat{\mathbf{v}}$, which gives rise to a parameter valuation.
Model checking at these instantiations has two benefits:
First, it allows for \emph{early termination}. 
We verify whether $\mdp[\hat{\mathbf{v}}]$ satisfies $\varphi$: 
If yes, we have found a solution even though the penalty variables have not converged to zero.
Second, the model checking procedures are numerically more stable and allow for exact arithmetic. 
Thereby, the solutions are more reliable than the solutions obtained from the convex solver.

The second change is in the update procedure.
Using the model checking results often overcomes convergence to infeasible solutions.
These problems may be described as follows:
Instead of instantiating the initial probability value for the next iteration as the obtained solution from the penalty-DC problem, we use the model checking result $\textrm{res}(\mathbf{p})$ of the instantiated MDP $\mdp[\hat{\mathbf{v}}]$ and set $\hat{\mathbf{p}}$ as the probability values of satisfying the specification for the instantiated MDP.
Model checking ensures that the probability variables are consistent with the parameter variables, i.e., that the constraints in~\eqref{eq:probcomputation_mdp} are all satisfied.\looseness=-1


\section{Sequential Convex Programming}
\label{sec:scp}
In this section, we discuss our second method, which is a sequential convex programming (SCP) approach with trust region constraints~\cite{yuan2015recent,mao2018successive,chen2013optimality}.
Similar to the penalty CCP method, the SCP method computes a locally optimal solution by iteratively approximating a nonconvex optimization problem.
The approximate problem is a linear program (LP), instead of a QCQP. 
This comes at the cost of a generally coarser approximation.\looseness=-1
%
%

More precisely, the main differences between SCP and CCP are, 
(1) the resulting convex problem is an LP, and solving is thus generally faster than solving a similar-sized QCQP, and 
(2) the convexified functions in the constraints are no longer upper bounds of the original functions. 
The approximation may generate optimal solutions in the convexified problem that are infeasible in the original problem.
Therefore, we include \emph{trust regions} and an additional model checking step similar to the CCP method to ensure that the new solution improves the objective.
The trust regions ensure that the resulting LP accurately approximates the nonconvex QCQP.
If the new solution indeed improves the objective, we accept and update the assignment of the variables and enlarge the trust region.
Otherwise, we contract the trust region, and do not update the assignment of the variables.
\subsection{Constructing the affine approximation}
We now detail how we linearize the bilinear functions in the constraints in~\eqref{eq:probcomputation_mdp}, similar to Section~\ref{sec:ccp}.
Recall that this constraint appears as
\begin{align*}
		p_s \geq \sum_{s'\in S}	\probmdp(s,\act,s')\cdot p_{s'}	&\quad 	\forall s\in S\setminus T,\,\forall \act\in\ActS(s).
\end{align*}

Similar to the previous section, consider the bilinear function in the above constraint
\begin{equation}
h(s,\act,s')=\probmdp(s,\act,s')\cdot p_{s'}
\end{equation}
and let 
\begin{equation}
\probmdp(s,\act,s')=2d\cdot y+c, \text{ and } p_{s'}=z,
\end{equation} where $y$ is the parameter variable, $z$ is the probability, and $c, d$ are constants, similar to the previous section. We then convexify $h(s,\act,s')$ as
\begin{equation}
h_{\textrm{a}}(s,\act,s') \colonequals
2 d\cdot((\hat{y}+\hat{z})+\hat{y}\cdot(z-\hat{z})+\hat{z}\cdot(y-\hat{y}))+c\cdot z,
\end{equation} where $\langle \hat{y}, \hat{z}\rangle$ are any assignments to $y$ and $z$.
Note that the function $h_{\textrm{a}}(s,\act,s')$ is  affine in the parameter variable $y$ and the probability variable $z$. 
After the linearization, the set of constraints~\eqref{eq:probcomputation_mdp} is replaced by the convex constraints
\begin{align}
 &\quad\forall s \in S \setminus T,\, \forall \act \in \ActS(s),\nonumber p_s \geq  \sum_{s' \in S} h_{\textrm{a}}(s,\act,s'), 
\end{align}
	
\begin{remark}
If the pMDP is not affine, i.e., $\probmdp(s,\act,s')$ is not affine in $V$ for every $s,s'\in S$ and $\act\in\ActS(s)$, then $h(s,\act,s')$ will not be a quadratic function in $V$ and probability variables $p_s'$.
	In this case, we can compute $h_a(s,\act,s')$ by computing a first order approximation with respect to $V$ and $p_{s'}$ around the previous assignment.
\end{remark}



Similar to the CCP method, we use \emph{penalty variables} $k_{s}$ for all $s\in S \setminus T$ to all linearized constraints, ensuring that they are always feasible.
However, the functions in these constraints do not over-approximate the functions in the original constraints. 
Therefore, a feasible solution to the linearized problem is potentially infeasible to the parameter synthesis problem.
To make sure that the linearized problem accurately approximates the parameter synthesis problem, we use a trust region constraint around the previous parameter instantiations.
The resulting LP is:%
\begin{align}
			\textnormal{minimize} &\quad p_{\sinit}+ \tau\sum_{\forall s\in S\setminus T}k_{s}\label{eq:min_qcqp_scp}\\
			\textnormal{subject to}\nonumber\\
		 p_s=1,	 &\quad\forall s\in  T,	\label{eq:targetprob_scp}\\
	 \probmdp(s,\act,s')\geq \epsgraph,		&\quad\forall s,s'\in S\setminus T, \forall\act\in\ActS(s),  \label{eq:well-defined_probs_scp}\\
	 \sum_{s'\in  S}\probmdp(s,\act,s')=1,	&\quad	\forall s\in S,\forall \act \in \ActS(s), \label{eq:well-defined_probs_scp1}\\
		\lambda \geq p_{\sinit},	&\quad 
								 \label{eq:probthreshold_scp}\\
k_{s} + p_s \geq  \sum_{s' \in \setminus T}   h_{\textrm{a}}(s,\act,s'),	&\quad \forall s\in S\setminus T, \forall \act \in \ActS(s),	\label{eq:probcomputation_scp}\\
			 k_{s} \geq 0,&\quad\forall s\in S\setminus T. 
			 \label{eq:slackvariable_scp}\\
		  \nicefrac{\hat{p}_{s}}{\delta'}\leq p_s\leq \hat{p}_s \cdot \delta'	& \quad  \forall s \in S\setminus T,\label{eq:trust_region_scp},\\
		  	\quad\nicefrac{\hat{\probmdp}(s,\act,s')}{\delta'} \leq \probmdp(s,\act,s')\leq \hat{\probmdp}&(s,\act,s')\cdot \delta',\label{eq:trust_region_scp2}\\
			&\quad	\forall s,s'\in S\setminus T,\forall \act \in \ActS(s), \nonumber	
		\end{align}
where $\tau > 0$ is as defined before, and $\hat{\probmdp}(s,\act,s')$ and $\hat{p}_s$ denotes the previous assignment for the parameter and probability variables. 
The constraints~\eqref{eq:trust_region_scp}--\eqref{eq:trust_region_scp2} are the trust region constraints. $\delta > 0$ is the size of the trust region, and $\delta'=\delta+1$. 

Unlike the convexification in the CCP method, the linearization step in the SCP does not over approximate functions in the constraint~\eqref{eq:probcomputation_scp}. 
Therefore, we cannot provide the soundness guarantees for the solutions to the above LP as we did for the CCP method in Theorem~\ref{thm:main}. 
On the other hand, the trust region constraints ensure that the linearization is accurate by restricting the set of feasible solutions around the previous solution. 
Recall that we integrate the model-checking step into the SCP method to obtain soundness for the SCP method.
We also show in the examples by a comparison with another SCP method that integrating the model-checking step significantly improves the performance.
Similar to the CCP method, we use the constraint~\eqref{eq:probthreshold_scp} to minimize the violation of the convexified constraints.
We demonstrate the linearization in Example~\ref{ex:scp}.


\begin{example}
	\label{ex:scp}
	Recall the pMC in Fig.~\ref{fig:pmc_reform} and the QCQP from Example~\ref{ex:nlp}.  After linearizing around an assignment for $\hat{v}, \hat{p}_{s_0}, \hat{p}_{s_1},$ and $\hat{p}_{s_2}$, the resulting LP with a trust region radius $\delta>0$ is given by
	\begin{align*}
		&\textnormal{minimize } \quad p_{s_0}+\tau\sum_{i=0}^2 k_{s_i} \\
&	\textnormal{subject to}  \nonumber\\
&\; p_{s_3}=1,\; \lambda \geq p_{s_0},\\
&\;  k_{s_0}+p_{s_0} \geq \hat{v}\cdot\hat{p}_{s_1}+\hat{p}_{s_1}\cdot(v-\hat{v})+\hat{v}\cdot(p_{s_1}-\hat{p}_{s_1}),\\
&\; k_{s_1}+p_{s_1} \geq p_{s_2}-\hat{v}\cdot\hat{p}_{s_2}-\hat{p}_{s_2}\cdot(v-\hat{v})-\hat{v}\cdot(p_{s_2}-\hat{p}_{s_2}),\\
&\;  k_{s_2}+p_{s_2} \geq\hat{v}\cdot\hat{p}_{s_3}+\hat{p}_{s_3}\cdot(v-\hat{v})+\hat{v}\cdot(p_{s_3}-\hat{p}_{s_3}),\\
& \; k_{s_0}\geq 0, k_{s_1}\geq 0, k_{s_2} \geq 0, \\
& \; \nicefrac{\hat{p}_{s_0}}{\delta'}\geq  p_{s_0}\geq \hat{p}_{s_0}\cdot\delta',
\; \nicefrac{\hat{p}_{s_1}}{\delta'}\geq  p_{s_1}\geq \hat{p}_{s_1}\cdot\delta',
\\&\; \nicefrac{\hat{p}_{s_2}}{\delta'}\geq  p_{s_2}\geq \hat{p}_{s_2}\cdot\delta', \; \nicefrac{\hat{v}}{\delta'}\geq  v\geq \hat{v}\cdot\delta'.
	\end{align*}
\end{example}
We detail our SCP method in Fig.~\ref{fig:scpwithmc}. 
We initialize the method with a guess for the parameters $\hat{\mathbf{v}}$, for the probability variables $\hat{\mathbf{p}}$, and the trust region $\delta>0$.
Then, we solve the LP~\eqref{eq:min_qcqp_scp}--\eqref{eq:trust_region_scp} that is linearized around $\hat{\mathbf{v}}$ and probability variables $\hat{\mathbf{o}}$.

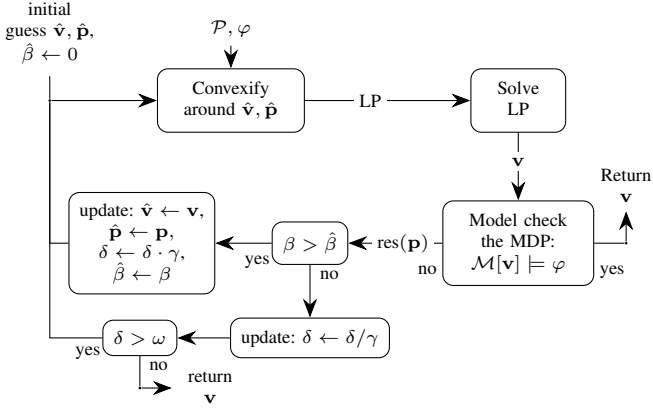
\begin{figure}[t]
	\centering
	\begin{tikzpicture}[every node/.style={font=\scriptsize},every text node part/.style={align=center},>={Stealth[scale=1.5]}]
\node[draw,rectangle, inner sep=4pt, rounded corners] (conv) {\begin{tabular}{c}Convexify\\around $\hat{\mathbf{v}}, \hat{\mathbf{p}}$\end{tabular}};
\node[circle,left=1.4cm of conv,inner sep=0pt, fill=black] (lconv) {};
\node[above=0.3cm of lconv] (start) {\begin{tabular}{c}initial \\ guess $\hat{\mathbf{v}}, \hat{\mathbf{p}}$,\\
	$\hat{\beta}\leftarrow 0$\end{tabular}};
\node[draw,rectangle, right=2.2cm of conv, inner sep=4pt, rounded corners] (dc) {\begin{tabular}{c}Solve \\ LP \end{tabular}};
\node[draw,rectangle, below=0.9cm of dc, inner sep=4pt, rounded corners](check) {\begin{tabular}{c}Model check\\ the MDP:\\$\mdp[\mathbf{v}] \models \varphi$\end{tabular}};
\node[draw,rectangle, left=1.25cm of check, inner sep=4pt, rounded corners](checkobj) {$\beta >\hat{\beta}$};
\node[draw,rectangle, left= 0.75cm of checkobj,  inner sep=4pt, rounded corners] (update) {update: $\hat{\mathbf{v}} \leftarrow \mathbf{v}$,\\$\hat{\mathbf{p}}\leftarrow\mathbf{p},$\\ $\delta\leftarrow\delta\cdot \gamma$,\\$\hat{\beta}\leftarrow {\beta}$};
\node[draw,rectangle, below= 0.7cm of checkobj,  inner sep=4pt, rounded corners] (update_rej) {update: $\delta\leftarrow\delta/ \gamma$};
\node[draw,rectangle, left = 0.70cm of update_rej, inner sep=4pt, rounded corners](checktrust) {$\delta >\omega$};
\node[above=0.3cm of conv] (input) {$\pmdp,\varphi$};
\node[circle,right=0.4cm of check,inner sep=0pt, fill=black] (solution-circ) {};
\node[above=0.4cm of solution-circ, inner sep=1pt] (solution) {\begin{tabular}{c}Return \\$\mathbf{v} $\end{tabular}};
\node[circle,below=0.4cm of checktrust,inner sep=0pt, fill=black] (solution_trust-circ) {};
\node[right=0.4cm of solution_trust-circ,inner sep=0pt] (solution_trust) {\begin{tabular}{c}return \\$\mathbf{v} $\end{tabular}};
\node[above right=-1.0cm and 0.03cm of update] (res) {};
\draw[-] (check) --  (solution-circ);
\draw[->] (solution-circ) --  (solution);
\draw[->] (checkobj) --(update_rej);
\draw[->] (check) --  node[pos=0.1,right,below right=-0.02cm and 0.1cm of check] {} (checkobj);
\draw[->] (checkobj) --  node[pos=0.1,right,below right=-0.35cm and -0.4cm of check] {} (update);
\draw[-] (update) -| (lconv);
\draw[->] (update_rej) -- (checktrust);
\draw [-] (checktrust)-| (lconv);
\draw [-] (checktrust)-- (solution_trust-circ);
\draw[->] (solution_trust-circ) --(solution_trust);
\node[above left=-0.30cm and 0.0cm of checktrust] {};
\node[below right=0.0cm and -0.50cm of checktrust] {};
\draw[->] (lconv) -- (conv);
\draw[-] (start) -- (lconv);
\draw[->] (input) -- (conv);
\draw[->] (conv) -- node[above] {} (dc);
\draw[->] (dc) --  (check);
\node[fill=white,rectangle,inner sep=2pt,below left=-0.77cm and 0.1cm of check,align=center] (j) {$\textrm{res}(\mathbf{p})$};
\node[fill=white,rectangle,below=0.25cm of dc,align=center,inner sep=2pt] (j) {$\mathbf{v}$};
\node[fill=white,rectangle,right=0.65cm of conv,align=center,inner sep=2pt] (i) {LP};
\node[fill=none,rectangle,below left=-0.20cm and -0.05cm  of checkobj,align=center,inner sep=2pt] (i3) {yes};
\node[fill=none,rectangle,below right=-0.00cm and -0.45cm of checkobj,align=center,inner sep=2pt] (i4) {no};
\node[fill=none,rectangle,below left=-0.20cm and -0.05cm  of checktrust,align=center,inner sep=2pt] (i3) {\added{yes}};
\node[fill=none,rectangle,below right=-0.00cm and -0.45cm of checktrust,align=center,inner sep=2pt] (i4) {\added{no}};
\node[fill=none,rectangle,below right=-0.40cm and -0.05cm of check,align=center] (i5) {yes};
\node[fill=none,rectangle,below left=-0.40cm and -0.05cm of check,align=center] (i6) {no};
\end{tikzpicture}
	\caption{SCP with model checking in the loop. 
		The NLP~\eqref{eq:min_mdp}--\eqref{eq:probcomputation_mdp} is linearized around $\hat{\mathbf{v}}, \hat{\mathbf{p}}$.
		Then, we solve the LP~\eqref{eq:min_qcqp_scp}--\eqref{eq:trust_region_scp} an optimal solution to the parameter values, denoted by $\hat{\mathbf{v}}$.
		After each iteration, we model check the instantiated MDP $\mdp[\mathbf{v}]$ to determine whether the specification is satisfied. 
		If the instantiated MDP satisfies the specification, we return the parameter instantiation. 
		Otherwise, we check whether the reachability probability, denoted by $\beta$, is improved compared to the previous iteration, denoted by $\hat{\beta}$.
		If the probability is improved, we accept this step, update the assignment for the parameters and the probability variables, and increase the size of the trust region $\delta$ by $\gamma$.
		Otherwise, we do not update the assignment, and decrease the size of the trust region.}
	\label{fig:scpwithmc}
\end{figure}

After obtaining an instantiation to the parameters $\mathbf{v}$, we model check the instantiated MDP $\mdp[\mathbf{v}]$ to obtain the values of probability variables $\textrm{res}(\mathbf{p})$ for the instantiation $\mathbf{v}$.
If the instantiated MDP indeed satisfies the specification, we return the instantiation $\mathbf{v}$.
Otherwise, we check whether the probability of reaching the target set  $\beta$ is larger than the previous best value $\hat{\beta}$.
If $\beta$ is larger than $\hat{\beta}$, we update the values for the probability and the parameter variables, and enlarge the trust region.
Else, we reduce the size of the trust region, and resolve the problem that is linearized around $\mathbf{v}$ and $\mathbf{p}$.
This procedure is repeated until a parameter instantiation that satisfies the specification is found, or the value of $\delta$ is smaller than $\omega >0$.
The intuition behind enlarging the trust region is as follows: If the instantiation to the parameters $\mathbf{v}$ increases the probability of reaching the target set  $\beta$ over the previous solution, then we conclude that the linearization is accurate. 
Consequently, the SCP method may take a larger step in the next iteration for faster convergence in practice.

For expected cost specifications, the resulting algorithm is similar, except, we accept the parameter instantiation if the expected cost is reduced compared to the previous iteration, and initialize $\hat{\beta}$ with a large constant.

\subsection{Efficiency Improvements in SCP}\label{sec:efficiencyimprovements_scp}

Similar to the previous section, we consider several efficiency improvements for the SCP method. 
We also apply the algorithmic improvements in Section~\ref{sec:efficiencyimprovements_ccp} about building the encoding to the SCP method.

\paragraph{Integrating Trust Regions and Model Checking with SCP}

In this section, we discuss the implementation details for the procedure in Fig.\ref{fig:scpwithmc} and discuss how we update trust regions in our implementation. 
In nonlinear optimization, trust region algorithms obtain the new iterate point by searching in a trust region of the current iterate point~\cite{yuan2015recent,mao2018successive,chen2013optimality}.
Trust region algorithms check if the approximated problem accurately represents the nonlinear optimization problem, and adapt the size of the trust region in each iteration.
In our algorithm, we check if the convexification is accurate by checking whether the obtained parameter instantiation improves the objective value.
If we improve the objective value, i.e., the probability of satisfying the specification, we will enlarge the trust region.
Otherwise, we conclude that the size of the trust region is too large, and contract its radius.

There are two critical differences between the method in Fig.~\ref{fig:scpwithmc} and the existing SCP methods. 
First, we perform a model checking to ensure that the obtained probability variables and parameter variables satisfy the constraints in~\eqref{eq:probcomputation_mdp}.
It also provides an accurate point for the next iteration as the probability and parameter variables are consistent on the underlying pMDP.

Second, we can compute the actual objective value by model checking and determine to accept the iterate instead of checking the feasibility of the solution for each constraint and the change of an approximate objective function~\cite{bolte2016majorization,mao2018successive}.
Computing the actual objective by model checking allows us to take larger steps in each iteration and significantly improve the algorithm's performance in practice.

\section{Convergence Properties of the Proposed Methods}
\label{sec:convergence}
In this section, we discuss the convergence properties of the proposed methods.
We also discuss the conditions on the constrained and penalty problem having the same set of locally optimal solutions.

\paragraph{Convergence properties of CCP}

The convergence of CCP is discussed in~\cite{lipp2016variations,yuille2002concave}. 
If CCP is started with a feasible point, then all of the solutions in each iteration will be feasible.
Additionally, the objective will decrease monotonically and will converge.
Reference~\cite{lanckriet2009convergence} showed that CCP converges to a solution that satisfies the full KKT conditions, which are necessary conditions for a solution to be locally optimal.
The convergence rate of CCP is established in~\cite{khamaru2018convergence} if the feasible set is convex, which is not the case for the parameter synthesis problem.
Therefore, we are not aware of any convergence rate results for the CCP method in the parameter synthesis problem.

\paragraph{Convergence properties of SCP}


The convergence rate statements of the trust region methods~\cite{yuan2015recent,mao2018successive,chen2013optimality} and other SCP methods~\cite{bolte2016majorization,auslender2013extended} rely on regularization assumptions such as Lipschitz continuity for the gradients of the functions in the objective and the constraints.
The QCQP in~\eqref{eq:min_mdp}--\eqref{eq:probcomputation_mdp} satisfies the regularization assumption, as all the functions in the objective and constraints are quadratic.\
The critical step in the convergence proofs is to show the existence of a small enough trust region to approximate the nonlinear optimization problem and sufficiently decrease the objective function in each iteration.
Recent linear and superlinear convergence results~\cite{bolte2016majorization,mao2018successive} rely on results from dynamical systems~\cite{kurdyka1998gradients}  to show linear convergence of iterative methods with a real-analytic objective and constraint functions.
We state a convergence property of the SCP method described by Fig.~\ref{fig:scpwithmc}. 

\begin{proposition}
	The sequence $(\mathbf{v}, \mathbf{p})$ generated by SCP method in Fig.~\ref{fig:scpwithmc} converges to a limit point.
\end{proposition}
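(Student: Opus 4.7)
The plan is to combine compactness of the iterate domain with the monotone improvement enforced by the model-checking accept/reject rule in Fig.~\ref{fig:scpwithmc}. First, I would observe that the graph-preservation bound~\eqref{eq:well-defined_eps} together with the simplex constraint~\eqref{eq:well-defined_probs_scp1} confines every parameter iterate $\mathbf{v}$ to a compact polytope, and the model-checked probability vector $\mathbf{p}$ is always contained in $[0,1]^{|S|}$. Hence the entire iterate sequence $\{(\mathbf{v}_k,\mathbf{p}_k)\}$ lies in a compact set, and by Bolzano--Weierstrass it admits at least one convergent subsequence.

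Next, I would exploit the accept/reject mechanism to establish a monotone sequence of objective values. The algorithm updates $(\hat{\mathbf{v}},\hat{\mathbf{p}})$ only when the model-checked value $\beta$ strictly exceeds the previously stored $\hat{\beta}$; since $\beta\in[0,1]$ for reachability specifications (and is bounded below by zero for expected-cost ones), the sequence $\{\hat{\beta}_k\}$ of accepted objective values is monotone and bounded, hence convergent. Between accepted iterates, the stored candidate $(\hat{\mathbf{v}},\hat{\mathbf{p}})$ is left unchanged and the trust-region radius is contracted by $\delta\leftarrow\delta/\gamma$; since the outer loop exits as soon as $\delta\leq\omega$, only finitely many consecutive rejections can occur before either a new iterate is accepted or the algorithm returns. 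In other words, the tail of the sequence is entirely generated by the subsequence of accepted iterates.

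The main obstacle will be upgrading convergence of the objective values into convergence of the iterates themselves, since monotonicity of $\{\hat{\beta}_k\}$ alone only guarantees subsequential convergence of $(\mathbf{v}_k,\mathbf{p}_k)$. Here I would proceed as follows. By compactness, pick any convergent subsequence $(\mathbf{v}_{k_j},\mathbf{p}_{k_j})\to(\mathbf{v}^\ast,\mathbf{p}^\ast)$. Because the model checker evaluates $\beta$ as a continuous function of the parameter valuation $\mathbf{v}$ on the graph-preserving region, and because the stored objective $\hat{\beta}_k$ converges, every convergent subsequence has the same limiting objective value $\beta^\ast$. The trust-region contraction upon rejection ensures that once the candidate approaches such a cluster point, the search neighborhood around it shrinks geometrically, preventing the sequence from later drifting to a different cluster point with the same objective. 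Combined with the regularity of the quadratic constraint functions in~\eqref{eq:probcomputation_mdp} (Lipschitz gradients and real-analyticity, as noted in the paragraph preceding the proposition), this pins down a unique limit $(\mathbf{v}^\ast,\mathbf{p}^\ast)$; a fully rigorous version of this last step would invoke a Kurdyka--{\L}ojasiewicz-type argument in the style of~\cite{bolte2016majorization,mao2018successive,kurdyka1998gradients}, which is precisely the machinery highlighted immediately before the proposition statement.
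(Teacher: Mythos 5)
Your second paragraph is, almost verbatim, the paper's entire proof: the authors observe that the model-checked objective $\beta$ of the accepted iterates is monotone (increasing for reachability, decreasing for expected cost), bounded in $[0,1]$ (respectively bounded below by $0$), and hence convergent by the monotone convergence theorem. Your first paragraph adds a compactness/Bolzano--Weierstrass step that the paper omits; this is a worthwhile addition, because convergence of the scalar objective values alone does not by itself produce an accumulation point of the iterates $(\mathbf{v},\mathbf{p})$ --- one also needs the iterates to live in a compact set, which your appeal to \eqref{eq:well-defined_eps}, \eqref{eq:well-defined_probs_scp1}, and $\mathbf{p}\in[0,1]^{|S|}$ supplies. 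Under the natural reading of the statement (the sequence has a limit point, i.e.\ an accumulation point), your first two paragraphs already close the argument, and your observation that only finitely many consecutive rejections occur before acceptance or termination is a correct and useful detail the paper glosses over.

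Your third paragraph, which tries to upgrade this to convergence of the full iterate sequence to a \emph{unique} limit, overreaches and is the one place the argument does not go through as sketched. The claim that trust-region contraction "prevents the sequence from later drifting to a different cluster point" is not justified: the radius $\delta$ is multiplied by $\gamma>1$ on every accepted step, so the search neighborhood does not shrink along the accepted subsequence, and nothing rules out oscillation between distinct cluster points sharing the limiting objective value $\beta^\ast$. A Kurdyka--{\L}ojasiewicz argument in the style of \cite{bolte2016majorization} would require the iteration map to be a genuine descent scheme on a fixed (semi)algebraic objective with a proximal regularization term, whereas here the update passes through the model checker, which --- as the paper itself notes immediately after the proposition --- is a nonsmooth mapping from parameters to probability variables. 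Neither the statement nor the paper's proof claims this stronger form of convergence, so you should simply drop the third paragraph rather than try to repair it.
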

\begin{proof}
	Let $f^k(\mathbf{v}, \mathbf{p})$ be the reachability probability of the instantiated MDP $\mdp[\mathbf{v}]$ at the $k-$th iteration. 
	It is clear that this sequence is bounded below by $0$, and above by $1$, and is monotonically increasing as we accept the parameter solutions only if the reachability probability is improved. 
	Therefore, due to Lebesgue's monotone convergence theorem~\cite[11.28]{rudin1976principles}, this sequence is convergent for reachability probabilities.
	For expected cost specifications, the sequence is monotonically decreasing, and bounded above by the instantiated cost in the first iteration, and below by $0$; therefore it is convergent.
\end{proof}

The model checking step in Fig.~\ref{fig:scpwithmc} involves a nonsmooth mapping from the parameter instantiations to probability variables due to maximizing the reachability probability at each action and makes the convergence rate analysis difficult.
For completeness, we show the convergence rate of a similar iterative method without model checking in Appendix~B.
The local convergence rate of the SCP method is \emph{linear} in Appendix~B.
However, in our numerical examples, we observed that the SCP method in Appendix~B does not perform well in practice due to requiring a small enough step size to ensure convergence.
On the other hand, we can check if we improved the objective directly in Step~2 of Algorithm~1, allowing us to take larger step sizes and reduce the required number of iterations compared to the existing methods.

\paragraph{Justification of penalty functions}
\label{sec:penalty}

In this section, we discuss the merit of using penalty functions for the CCP and SCP methods. 
Using penalty functions~\cite{zhang1985improved,mayne1987exact,han1979exact} ensure that each convexified problem is feasible. 
A \emph{penalty problem}~\cite{mao2018successive} puts a penalty for violating each constraint in a constrained optimization problem instead of enforcing them as hard constraints.
A penalty function is exact if a constrained optimization problem and the penalty problem have the same optimality conditions.

For simplicity, we denote our variable pair $(\mathbf{v}, \mathbf{p})$  as $x$ in this section.
Following the definitions in~\cite[Section 5.1.1]{boyd_convex_optimization}, we denote objective in~\eqref{eq:min_mdp} as $f_0(x)$, the inequality constraints in~\eqref{eq:well-defined_probs_mdp}, \eqref{eq:probthreshold_mdp}--\eqref{eq:probcomputation_mdp} as 
\begin{align*}
f_i(x)\leq 0, \quad i=1,\ldots,m,
\end{align*}
where $m$ is the number of inequality constraints,  and the equality constraints in~\eqref{eq:targetprob_mdp} and~\eqref{eq:well-defined_probs_mdp1} as
\begin{align*}
g_i(x)=0, \quad i=1,\ldots,p,
\end{align*}
where $p$ is the number of inequality constraints. We now state the stationary part of the first order necessary conditions for a point~$\hat{x}$ to be a locally optimal solution.
\begin{theorem}[Karush-Kuhn-Tucker conditions,~\cite{boyd_convex_optimization}]
	If $\bar{x}$ is a locally optimal solution to the NLP in~\eqref{eq:min_mdp}--\eqref{eq:probcomputation_mdp}, then there exist optimal Lagrange multipliers $\mu^{\star}_i \geq 0, i=1,\ldots,m$ and $\nu^{\star}_i, i=1\ldots,p$ that satisfies the following
	\begin{align*}
	\nabla f_o(\bar{x})+\sum_{i=1}^{m}\mu^{\star}_i\nabla f_i(\bar{x})+\sum_{i=1}^{p}\nu^{\star}_i\nabla g_i(\bar{x})=0,
	\end{align*}
	under an appropriate constraint qualifier condition.
	\label{thm:kkt}
\end{theorem}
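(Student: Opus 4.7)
The plan is to follow the classical proof of the KKT necessary conditions (as presented in, e.g., Boyd--Vandenberghe or Bertsekas). The central idea is to translate the geometric statement ``$\bar{x}$ admits no feasible descent direction'' into an algebraic statement about gradients by means of a theorem of the alternative. Since this is a restatement of a textbook result, the proof would essentially be a pointer plus a sketch of the standard argument specialized to our variables $x=(\mathbf{v},\mathbf{p})$.

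First I would reduce to the active constraints. Let $\mathcal{A}=\{i : f_i(\bar{x})=0\}$; by continuity, the inactive inequality constraints $f_i(\bar{x})<0$ remain slack in a neighborhood of $\bar{x}$, so only the constraints in $\mathcal{A}$ and the equalities $g_j$ matter locally. Next I would form the linearized feasible cone
\begin{align*}
F_L(\bar{x}) = \{\,d : \nabla f_i(\bar{x})^{\top}d \le 0\ (i\in\mathcal{A}),\ \nabla g_j(\bar{x})^{\top}d = 0\ (j=1,\dots,p)\,\}.
\end{align*}
The constraint qualification hypothesis (for instance LICQ or MFCQ applied to the NLP~\eqref{eq:min_mdp}--\eqref{eq:probcomputation_mdp}) is exactly what is needed to equate $F_L(\bar{x})$ with the tangent cone of the feasible set at $\bar{x}$; this is the geometric content of any CQ.

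With this identification, local optimality of $\bar{x}$ gives $\nabla f_0(\bar{x})^{\top}d \ge 0$ for every $d\in F_L(\bar{x})$, i.e.\ the system
\begin{align*}
\nabla f_0(\bar{x})^{\top}d < 0,\quad \nabla f_i(\bar{x})^{\top}d \le 0\ (i\in\mathcal{A}),\quad \nabla g_j(\bar{x})^{\top}d = 0\ (j=1,\dots,p)
\end{align*}
is infeasible. Applying Farkas' lemma (or Motzkin's theorem of the alternative, to handle the equality rows) to this infeasibility yields nonnegative multipliers $\mu_i^{\star}\ge 0$ for $i\in\mathcal{A}$ and free multipliers $\nu_j^{\star}\in\mathbb{R}$ with
\begin{align*}
\nabla f_0(\bar{x}) + \sum_{i\in\mathcal{A}} \mu_i^{\star}\,\nabla f_i(\bar{x}) + \sum_{j=1}^{p} \nu_j^{\star}\,\nabla g_j(\bar{x}) = 0.
\end{align*}
Setting $\mu_i^{\star}=0$ for $i\notin\mathcal{A}$ extends this to all $i=1,\dots,m$ and simultaneously encodes complementary slackness, which is precisely the stated identity.

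The main obstacle is the constraint-qualification step, since KKT can fail without a CQ (standard counterexamples involve tangent constraints that pin the feasible set to a curve). For our specific NLP one would verify a suitable CQ from the structure: the equalities~\eqref{eq:targetprob_mdp}--\eqref{eq:well-defined_probs_mdp1} are affine (their gradients are constant and easy to check for linear independence after eliminating the trivial $p_s=1$ rows), and the inequalities~\eqref{eq:well-defined_probs_ccp},~\eqref{eq:probthreshold_mdp}--\eqref{eq:probcomputation_mdp} are at most quadratic, so MFCQ can typically be established by exhibiting a direction $d$ that strictly relaxes all active inequalities while preserving the affine equalities. The remaining calculations (Farkas, partitioning active/inactive, extending $\mu^{\star}$ by zero) are routine.
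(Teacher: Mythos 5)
Your argument reproduces the textbook derivation of the KKT conditions (active-set reduction, linearized cone, Farkas/Motzkin), which is precisely the part the paper does \emph{not} prove: the theorem is stated as a citation of Boyd--Vandenberghe, and the paper's entire proof is devoted to the step you defer at the end, namely verifying that the NLP in~\eqref{eq:min_mdp}--\eqref{eq:probcomputation_mdp} actually satisfies a constraint qualification. Saying that ``MFCQ can typically be established by exhibiting a direction $d$ that strictly relaxes all active inequalities'' is not a routine gap-filler here: at a locally optimal $\bar{x}$ the active constraints among~\eqref{eq:probcomputation_mdp} are exactly the Bellman equations of the induced Markov chain, and a direction that strictly relaxes all of them simultaneously while respecting the affine equalities is not obviously available, so the CQ verification is the substantive content and has to be argued from the model structure.

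The paper establishes LICQ directly, as follows. It assumes without loss of generality that the pMDP is \emph{simple}, so that the well-definedness constraints~\eqref{eq:well-defined_probs_mdp}--\eqref{eq:well-defined_probs_mdp1} collapse to box constraints $\epsgraph \leq v \leq 1-\epsgraph$ on each parameter, whose active gradients are trivially linearly independent. Then, after the graph preprocessing that removes states reaching $T$ with probability $0$ or $1$ (which also disposes of~\eqref{eq:targetprob_mdp}), it observes that for any fixed parameter valuation the constraints~\eqref{eq:probcomputation_mdp} are affine in the probability variables and the active ones form the linear system $\mathbf{p}=\mathbf{P}\mathbf{p}+\mathbf{b}$ of the Markov chain induced by the optimal strategy; since this system has a unique solution, the gradients of the active constraints are linearly independent. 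Combining the two blocks yields LICQ. If you want your write-up to match the theorem's role in the paper, you should replace your closing sketch with an argument of this kind; the Farkas-lemma portion can simply be cited.
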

	
\begin{proof}
We provide the proof for satisfying the linear independence constraint qualification in Appendix B.
\end{proof}
For the penalty problem, let the objective be
\begin{align*}
f_0(x)+\sum_{i=1}^{m}\bar{\mu}_i\text{max}(0, f_i(x))+\sum_{i=1}^{p}\bar{\nu}_i\vert g_i(x)\vert,
\end{align*}
where $\bar{\mu}_i, i=1,\ldots,m$ and $\bar{\nu}_i, i=1\ldots,p$ are the penalty weights and the penalty problem has no constraints. 
We now state the \emph{exactness} condition, which states the necessary and sufficient condition on the original QCQP and the penalty problem having the same set of locally optimal solutions.

\begin{theorem}[Exactness conditions,~\cite{han1979exact}]
	If $\bar{x}$ satisfies the condition in Theorem~\ref{thm:kkt} with Lagrange multipliers $\mu^{\star}_i \geq 0, i=1,\ldots,m$ and $\nu^{\star}_i, i=1\ldots,p$, and let the penalty weights $\bar{\mu}_i, i=1,\ldots,m$ and $\bar{\nu}_i, i=1\ldots,p$ satisfy
	\begin{align*}
	&\bar{\mu}_i > |\mu^{\star}_i|, \quad i=1,\ldots,m,\\
	&\bar{\nu}_i > |\nu^{\star}_i|, \quad i=1,\ldots,p,
	\end{align*}
	then $\bar{x}$ is also a locally optimal solution for the penalty problem. The converse result also holds.
	\label{thm:exact}
\end{theorem}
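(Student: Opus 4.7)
The plan is to prove both directions of the equivalence using classical exact-penalty theory, working with directional derivatives to handle the nonsmoothness of the penalty objective
\begin{equation*}
P(x) \;:=\; f_0(x) + \sum_{i=1}^m \bar{\mu}_i \max(0, f_i(x)) + \sum_{i=1}^p \bar{\nu}_i |g_i(x)|.
\end{equation*}

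\textbf{Forward direction.} Assume $\bar{x}$ is a KKT point with multipliers $(\mu^\star, \nu^\star)$. Since $\bar{x}$ is feasible, the $\max$-terms vanish for inactive inequality constraints and the $|\cdot|$-terms vanish identically at $\bar{x}$, so $P(\bar{x}) = f_0(\bar{x})$. The key step is to compute the one-sided directional derivative $P'(\bar{x}; d)$ in an arbitrary direction $d$ and use the KKT stationarity equation of Theorem~\ref{thm:kkt} to eliminate $\nabla f_0(\bar{x})^\top d$. Grouping terms yields
\begin{align*}
P'(\bar{x}; d) &= \sum_{i \in \mathcal{A}} \bigl[\bar{\mu}_i (\nabla f_i(\bar{x})^\top d)_+ - \mu^\star_i \nabla f_i(\bar{x})^\top d\bigr] \\
&\quad + \sum_{i=1}^p \bigl[\bar{\nu}_i |\nabla g_i(\bar{x})^\top d| - \nu^\star_i \nabla g_i(\bar{x})^\top d\bigr],
\end{align*}
where $\mathcal{A}$ denotes the active set. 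A case analysis on the sign of each inner product, combined with the strict inequalities $\bar{\mu}_i > \mu_i^\star \geq 0$ and $\bar{\nu}_i > |\nu_i^\star|$, shows every summand is non-negative. Hence $P'(\bar{x}; d) \geq 0$ for all $d$, which together with Lipschitz continuity of $P$ yields local optimality of $\bar{x}$ for the penalty problem.

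\textbf{Converse direction.} Suppose $\bar{x}$ is a local minimizer of $P$. I would first argue feasibility: with penalty weights that dominate the dual multipliers, at any local minimum the subdifferential inclusion $0 \in \partial P(\bar{x})$ forces $f_i(\bar{x}) \leq 0$ and $g_i(\bar{x}) = 0$, since otherwise one can decrease $P$ by moving along the negative subgradient of a violated penalty term. Once feasibility is established, $P(\bar{x}) = f_0(\bar{x})$, and for any feasible $x$ in a neighborhood we have $f_0(x) = P(x) \geq P(\bar{x}) = f_0(\bar{x})$, proving local optimality of $\bar{x}$ for the original constrained NLP. The corresponding KKT multipliers are then extracted by applying the sum, max, and absolute-value subdifferential calculus rules to $0 \in \partial P(\bar{x})$ under the constraint qualification assumed in Theorem~\ref{thm:kkt}.

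\textbf{Main obstacle.} The principal difficulty is handling the nonsmoothness of $P$: the directional-derivative argument requires a careful case split over active inequality constraints and over the signs of $\nabla f_i(\bar{x})^\top d$ and $\nabla g_i(\bar{x})^\top d$. The strict inequalities on the penalty weights are essential and cannot be weakened to equalities, since strictness is exactly what makes the bracketed quantities above non-negative in the critical case $\nabla f_i(\bar{x})^\top d > 0$. A secondary subtlety is that the converse direction implicitly depends on the same constraint qualification invoked in Theorem~\ref{thm:kkt}; without it, the subdifferential inclusion need not decompose into multipliers of the required form.
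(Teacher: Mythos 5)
The paper does not actually prove Theorem~\ref{thm:exact}: it is imported verbatim from the cited reference \cite{han1979exact}, so there is no in-paper argument to compare yours against. Judged on its own terms, your forward direction starts correctly: at a feasible $\bar{x}$ the penalty terms for inactive constraints vanish locally, the directional derivative formula you write down is right, and the case split on the signs of $\nabla f_i(\bar{x})^\top d$ and $\nabla g_i(\bar{x})^\top d$ together with $\bar{\mu}_i > \mu_i^\star \geq 0$ and $\bar{\nu}_i > |\nu_i^\star|$ does give $P'(\bar{x};d) \geq 0$ for every $d$. This is the standard computation. But the final inference --- ``$P'(\bar{x};d)\geq 0$ for all $d$, together with Lipschitz continuity of $P$, yields local optimality'' --- is not valid. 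Nonnegativity of all one-sided directional derivatives only makes $\bar{x}$ a \emph{stationary} point of $P$; for a nonconvex $P$ a stationary point can be a saddle or even a local maximizer along directions where the derivative vanishes (consider $d$ orthogonal to all active gradients, along which $P$ may decrease at second order). In the present setting $P$ is genuinely nonconvex because the $f_i$ coming from~\eqref{eq:probcomputation_mdp} are bilinear, so convexity cannot be invoked to upgrade stationarity to minimality. The actual Han--Mangasarian results close this gap by assuming either convexity of the program (for the global statement) or a second-order sufficiency condition at $\bar{x}$ (for the local, nonconvex statement); some hypothesis of this kind is missing from your argument, and indeed from the theorem as the paper states it.

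The converse direction has a similar soft spot. The claim that $0 \in \partial P(\bar{x})$ at a local minimizer ``forces'' feasibility because one could otherwise descend along the negative subgradient of a violated penalty term is not true in general: exact penalty functions of nonconvex problems are well known to admit \emph{infeasible} local minimizers no matter how large the weights are, because at an infeasible point the gradients of the violated constraints may fail to provide a descent direction (e.g., when they nearly cancel against $\nabla f_0$ or against each other). Ruling this out requires a constraint qualification to hold at the candidate point, not merely at KKT points of the original problem, or an error-bound/calmness condition. Your closing remark that the converse ``implicitly depends on the same constraint qualification'' gestures at this, but the qualification in Theorem~\ref{thm:kkt} is asserted only at locally optimal solutions of the constrained problem, which is not where you need it in the converse. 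So both directions rest on a correct first-order skeleton but each needs an additional regularity or second-order hypothesis to actually reach the stated conclusion.
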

Theorem~\ref{thm:exact} guarantees the existence of penalty weights that preserve the set of locally optimal solutions.
In practice, we select a large enough $\tau$ in SCP and CCP to minimize the violations of the constraints.

\section{Numerical Examples}\label{sec:experiments}
%
We evaluate our approaches on benchmark examples that are subject to either reachability or expected cost specifications.
We implement the proposed CCP and SCP methods with the discussed efficiency improvements from Section~\ref{sec:efficiencyimprovements_ccp} and Section~\ref{sec:efficiencyimprovements_scp}  in the parameter synthesis framework \prophesy~\cite{dehnert-et-al-cav-2015}.
We use the probabilistic model checker \storm~\cite{DBLP:conf/cav/DehnertJK017} to extract an explicit representation of a pMDP.
We keep the pMDP in memory and update the parameter instantiations using an efficient data structure to enable efficient repetitive model checking.
To solve convex QCQPs and LPs, we use Gurobi 9.0~\cite{gurobi}, configured for numerical stability.

\paragraph{Environment}
We evaluate the SCP and CCP methods with 16 3.1 GHz cores, a 32 GB memory limit, and an 1200 seconds time limit (TO).
The objective is to find a feasible parameter valuation for pMCs and pMDPs with specified thresholds on probabilities or costs in the specifications, as in Problem~\ref{prob:pmdpsyn}. 
We ask for a well-defined valuation of the parameters, with $\epsgraph=10^{-6}$.
We run all the approaches with the same configuration of \storm. 
For pMCs, we enable weak bisimulation, which reduces the number of states of the pMC in all examples.
We do not use bisimulation for pMDPs.
We provide all the codes and log files for the numerical examples in \texttt{https://github.com/mcubuktepe/pMDPsynthesis}.

\paragraph{Baselines}
We compare the runtimes with three baselines: a particle swarm optimization (PSO) implementation, solving the nonconvex QCQP directly with Gurobi~\cite{gurobi}, indicated by ``GRB'' in the following results, and an SCP algorithm S$l^1$QP, indicated by ``S$l^1$QP'' in the following results, from~\cite{bolte2016majorization}, with convergence guarantees to KKT points.
PSO is a heuristic sampling approach that searches the parameter space, inspired by~\cite{chen2013model}.
For each valuation, PSO performs model checking \emph{without} rebuilding the model, instead it adapts the matrix from previous valuations.
As PSO is a randomized procedure, we run it five times with random seeds 0--19. 
The PSO implementation requires the well-defined parameter regions to constitute a hyper-rectangle, as proper sampling from polygons is a nontrivial task.
For pMCs that originate from POMDPs, we also compare with a POMDP solver \tool{PRISM-POMDP}~\cite{norman2017verification} by using the relation between pMCs and POMDPs~\cite{DBLP:journals/corr/abs-1710-10294}.
We note that \tool{PRISM-POMDP} computes a lower and an upper bound on the probability of satisfying a temporal logic specification by discretizing the uncountable belief space. 
Our approach computes a finite-memory policy that maximizes or minimizes the probability of satisfying a temporal logic specification.

%

\paragraph{Tuning constants}
For the CCP method, we initialize the penalty parameter $\tau=0.05$ for reachability, and $\tau=5$ for expected cost, a conservative number in the same order of magnitude as the values $\hat{p}_s$.
As expected cost evaluations have wider ranges than probability evaluations, we observed that a larger $\tau$ increases numerical stability.
For CCP, we pick $\mu = \max_{s\in S\setminus T }\hat{p}_s$.
We update $\tau$ by adding $\mu$ after each iteration. 
Empirically, increasing $\tau$ with larger steps is beneficial for the run time but induces more numerical instability.
In contrast, in the literature, the update parameter $\mu$ is frequently used as a constant, \ie, it is not updated between the iterations. 
In, e.g,~\cite{lipp2016variations}, $\tau$ is multiplied by $\mu$ after each iteration.

For the SCP method, we use a constant $\tau=10^4$ for the reachability and expected cost specifications.
The initial trust-region value is $\delta=2$ and $\gamma=1.5$, where we adjust the size of the trust region after each iteration.
Finally, we use $\omega=10^{-4}$ and terminate the procedure if the trust region is smaller than $\omega$.\looseness=-1


Our initial guess $\hat{v}$ is the center of the parameter space and thereby minimize the worst-case distance to a feasible solution.
For $\hat{p}_s$, we use the threshold $\lambda$ from the specification $\reachProplT$ to initialize the probability variables, and analogously for expected cost specifications.

\subsection{Case Study: Satellite Collision Avoidance}
\begin{figure*}[t]
	\centering
\begin{subfigure}[t]{0.499\linewidth}
	\centering
	\includegraphics[trim = 90 0 136 0, clip, width=0.999\linewidth]{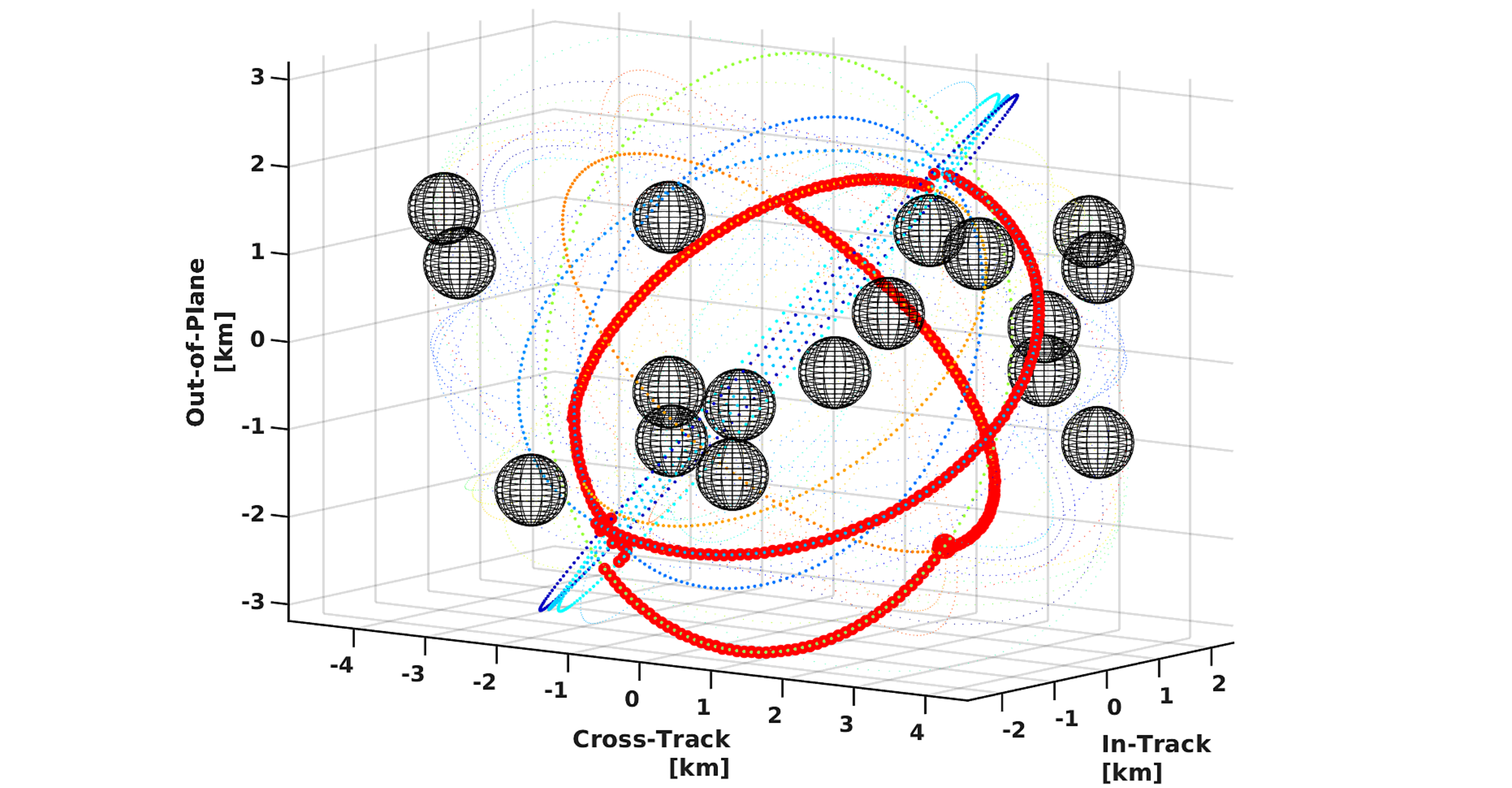}	
	\caption{Trajectory obtained from a memoryless policy.}
	\label{fig:sat1}
\end{subfigure}%
\hfill
\begin{subfigure}[t]{0.499\linewidth}
		\centering
			\includegraphics[trim = 90 0 136 0, clip, width=0.999\linewidth]{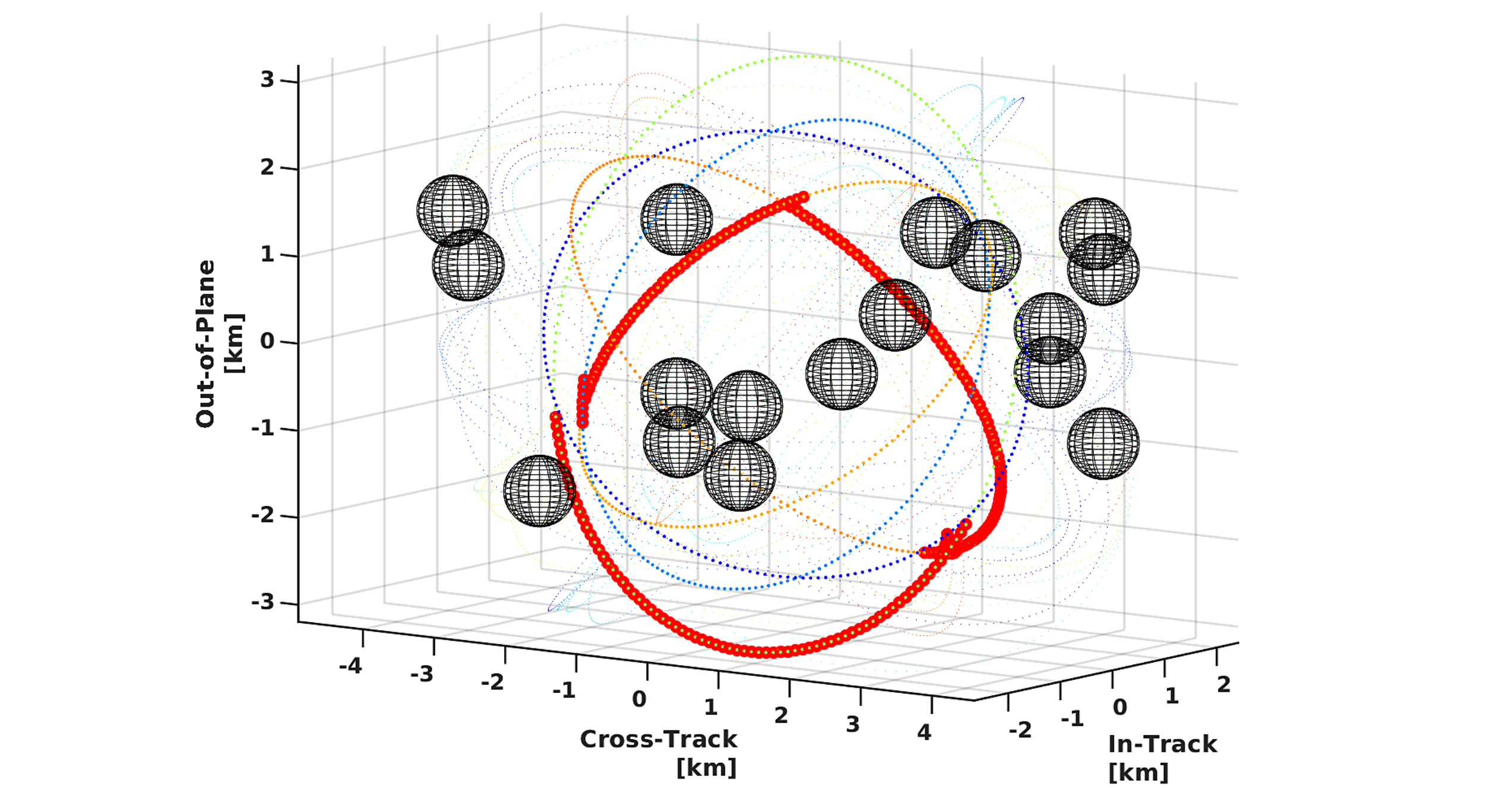}	
				\caption{Trajectory obtained from a fimite-memory policy with 5 memory of observations.}
			\label{fig:sat5}
		\end{subfigure}
	\caption{The obtained trajectories are shown in red that successfully finish an orbit around the origin. We highlight the initial location by a big circle. We highlight the NMTs used during the orbit and depict the objects to avoid by black spheres.}
\label{fig:sat}
\end{figure*}

In this example, we consider a satellite collision avoidance problem~\cite{frey2017constrained,hobbs2020taxonomy}. 
We formulate the spacecraft dynamics in Hill's reference
frame, with the origin at a specified location on a
circular spacecraft orbit~\cite{weiss2015safe,wie2008space}. 
We define the three axes of the motion as in~\cite{weiss2015safe,wie2008space}. 
We consider circular orbits in this example, as it yields time-invariant satellite dynamics with natural motion trajectories (NMTs).
The nominal orbital radius for the satellite dynamics is 7728 km.


\setlength{\tabcolsep}{5pt} 
\begin{table*}[t]
	\centering
	\footnotesize{
		\caption{Results for the satellite collision avoidance example.}
		\label{tab:results_satellite}
		\scalebox{1.00}{
			\begin{tabular}{ll|rrr|rrr|rr|rr|rr|rr}
				\multicolumn{2}{c|}{Problem} &	\multicolumn{3}{c|}{Info} & \multicolumn{3}{c|}{PSO} & \multicolumn{2}{c|}{SCP} & \multicolumn{2}{c|}{CCP} & \multicolumn{2}{c|}{S$l^1$QP} &  \multicolumn{1}{c}{GRB}\\
				Set & Spec   & States   & Trans.  & Par.      & tmin   & tmax   & \textbf{tavg}  & \textbf{t}    & iter & \textbf{t}     & iter & \textbf{t}     & iter  & \textbf{t}  \\\hline
				Satellite & $\p_{\geq 0.5}$       & 6265   & 17436  & \highlight{231}     &  --      & --       &   \TO    &  \textbf{8} & 6 & 1142 & 421 & 977 & 1061    & \TO \\
				Satellite & $\p_{\geq 0.9}$       & 6265   & 17436  & \highlight{231}     &  --      & --       &   \TO    &  \textbf{14} & 12 & \TO  & --& \TO  & --  & \TO \\
				Satellite & $\p_{\geq 0.95}$       & 6265   & 17436  & \highlight{231}     &  --      & --       &   \TO   & \TO & -- & \TO  & -- & \TO  & --  & \TO \\
				Satellite-fm & $\p_{\geq 0.95}$       & 31325   & 156924  & \highlight{2555}     &  --      & --       &   \TO    &  \textbf{146} & 10 & \MO & -- & \TO & --  & \TO \\					
				Satellite-fm & $\p_{\geq 0.995}$       & 31325   & 156924  & \highlight{2555}     &  --      & --       &   \TO    &  \textbf{239} & 18 & \MO  & -- & \TO  & --  & \TO \\		
				Satellite-1440 & $\p_{\geq 0.995}$       & 217561   & 615433  & \highlight{2248}     &  --      & --       &   \TO    &  \textbf{386} & 4 & \MO  & --  & \TO  & -- & \TO \\	
				Satellite-3600 & $\p_{\geq 0.995}$       & 217561   & 615433  & \highlight{5337}     &  --      & --       &   \TO    &  \textbf{336} & 4 & \MO   & -- & \TO  & -- & \TO \\
				Satellite-7200 & $\p_{\geq 0.995}$       & 217561   & 615433  & \highlight{10042}     &  --      & --       &   \TO    &  \textbf{370} & 4 & \MO    & -- & \TO  & --  & \TO \\
			\end{tabular}
	}}
\end{table*}

The spacecraft dynamics include periodic behavior around the earth, and a set of natural motion trajectories (NMTs)~\cite{kim2007mission,frey2017constrained}. 
In this example, we maximize the probability of successfully finishing a cycle in orbit by avoiding a collision with other objects in space.
Given a set of NMTs, we form an undirected graph with one node corresponding to each closed NMT and time index $t$, and two nodes share an edge if the distance between them is 250km.
If two nodes share an edge, then it is possible to perform a transfer between two nodes.
The state variables of the POMDP are (1) $n \in \lbrace 1, \ldots, 36\rbrace$ depicting the current NMT of the satellite, and (2) $t\in \lbrace 1, \ldots, I\rbrace$, depicting the current time index for a fixed NMT.
We use different values of $I$ in the examples. 
The satellite can follow its current NMT with no fuel usage.
The satellite can get an observation every $10$ steps, resulting in $720$ different observations.
In each NMT and time index, the satellite can either stay in the current orbit, incrementing the time index by $1$ or can transfer into a different nearby NMT if the underlying two nodes share an edge.
In our model, the probability of successfully switching to another NMT is $0.9$, and the satellite will transfer to a different nearby orbit with a probability of $0.1$.

The specification is $\p_{\geq \lambda}(\finally T)$, which asserts that the probability of finishing a cycle in orbit without colliding another object is greater than a threshold $\lambda$.
The objective is to compute a switching strategy that satisfies the specification with a probability that is greater than $\lambda$, ensuring that the satellite does not collide with another object with high probability. 
For example, in the satellite collision avoidance problem, the strategy can determine at which point in space to perform a transfer between two orbits.
This strategy and the corresponding controller for the transfer can then be implemented in the underlying system to perform collision avoidance.

%
%

In our first model, denoted by ``Satellite,'' we discretize the trajectory into $I=200$ time indices.
To further show the scalability of our SCP method, we also compute a finite-memory policy with $I=200$, denoted by ``Satellite-fm''.
We also discretize the trajectory into $I=8000$ time indices with $36$ NMTs, allowing a control input to be given to the satellite around every $0.2$ second.
We also consider the effect of having different levels of sensor quality, where the satellite can get an observation in every $40$, $80$, and $200$ time steps.
The resulting POMDPs have $288000$ states and $7200$, $3600$, and $1440$ observations, denoted by ``Satellite-7200,'' ``Satellite-3600,'' and ``Satellite-1440,'' respectively.
The resulting pMCs have $10042$, $5337$, and $2448$ parameters, respectively.

The detailed results for the models are shown in Table~\ref{tab:results_satellite}.
The first two columns refer to the benchmark instance, the next column to the specification. 
We give the number of states (States), transitions (Trans.), and parameters (Par.).
We then give the \emph{minimum} (tmin), the \emph{maximum} (tmax) and \emph{average} (\textbf{tavg}) runtime (in seconds) for PSO with different seeds, and the runtime obtained using CCP, SCP and S$l^1$QP (\textbf{t}).
We also give the number of CCP, SCP and S$l^1$QP iterations (iter).

We show the trajectories from a memoryless policy in Fig.~\ref{fig:sat1} and from a policy with finite memory in Fig.~\ref{fig:sat5}. 
The average length of the trajectory with the memoryless policy is 402, which is twice the length of the trajectory from the finite-memory policy, given by 215.
The expected cost is due to switching to a different NMT, for the finite-memory policy is 619 Newton $\cdot$ seconds. 
In contrast, it is 1249 Newton $\cdot$ seconds for the memoryless policy.
We give detailed results of the computation in Table~\ref{tab:results_satellite}.
We observe that after $6$ minutes of computation, using larger pMCs with finite-memory policy yields superior policies in the probability of satisfying the specification with $I=200$.

We obtained a solution with a collision probability of less than $10^{-3}$ on models with $I=8000$ and using memoryless policies, showing the benefit of increasing the number of time indices compared to the previous examples.
The computation took $370$, $336$, and $386$ seconds for the models with $7200$, $3600$, and $1440$ observations, respectively, showing that our approach does not scale exponentially with the number of observations and parameters.

\setlength{\tabcolsep}{5pt} 
\begin{table*}[t]
	\centering
	\footnotesize{
		\caption{pMC benchmark results}
		\label{tab:results_pmc}
		\scalebox{1.00}{
			\begin{tabular}{ll|rrr|rrr|rr|rr|rr|rr}
				\multicolumn{2}{c|}{Problem} &	\multicolumn{3}{c|}{Info} & \multicolumn{3}{c|}{PSO} & \multicolumn{2}{c|}{SCP} & \multicolumn{2}{c|}{CCP} & \multicolumn{2}{c|}{S$l^1$QP} &  \multicolumn{1}{c}{GRB}\\
				Set & Spec   & States   & Trans.  & Par.      & tmin   & tmax   & \textbf{tavg}  & \textbf{t}    & iter & \textbf{t}     & iter & \textbf{t}     & iter  & \textbf{t}  \\\hline
				Brp           & $\p_{\leq 0.1}$       & 324    & 452  & \highlight{2}     &  0     & 0       &  \textbf{0}     & 0  &1  & 0   & 4 & 0   & 19 & 0 \\
				Brp           & $\p_{\leq 0.1}$       & 20999    & 29703  & \highlight{2}     &  2     & 4      &  \textbf{2}     & {6} & 1   & 180  & 87  & \TO   & --  & {5} \\
				Crowds           & $\p_{\leq 0.1}$       & 80    & 120  & \highlight{2}     &  1     & 1       &  1     & 2  &2   & \textbf{2}   &   3& 2   &   2 & 2 \\
				Nand       & $\p_{\leq 0.05}$       & 5447    & 7374  & \highlight{2}     &  0     & 0       &  \textbf{0}     & 1  &1   & 1   &   1 & 1   &   1 & 14 \\\hline
				Maze           & $\EV_{\leq 14}$       & 1303    & 2658  & \highlight{590} &     123 & 201 & 167  &  \textbf{1}  & 5 &   36  & 28  &   27   & 450 & \TO \\
				Maze           & $\EV_{\leq 10}$       & 1303    & 2658  & \highlight{590} &     -- &  -- & {\TO}             &  \textbf{2}  & 5 &   36  & 54  &   378   & 1047 & \TO \\
				Maze          & $\EV_{\leq 6}$       & 1303    & 2658  & \highlight{590}     &     -- &  -- & {\TO}          &  \textbf{3}  & 9  & 43     &  77&   \TO   & --  & \TO  \\
				Maze           & $\EV_{\leq 5.3}$       & 1303    & 2658  & \highlight{590}     &  -- &  -- & {\TO}        &  \textbf{7} & 25 &   152   & 192 &   \TO   & -- & \TO  \\
				Netw           & $\EV_{\leq 10}$       & 5040    & 13327  & \highlight{596}       &  213      & 273       &   243    &  3  & 1 & \textbf{3}     &   1 & \TO   & -- &\TO  \\
				Netw           & $\EV_{\leq 5}$       & 5040    & 13327  & \highlight{596}       &  --      & --       &   \TO    &  \textbf{3} & 2  & 6      & 3  & \TO   & -- &\TO \\
				Netw           & $\EV_{\leq 3.3}$       & 5040    & 13327  & \highlight{596}      &  --      & --   &   \TO     &  \textbf{20}   &27  & 193   & 97  &\TO   & -- & \TO \\
				%
				%
				Drone          & $\p_{\geq 0.90}$       & 4179    & 9414  & \highlight{1053}     & --      & --       &   \TO    &  \textbf{3}  & 3 & 23  & 13 & 96  & 207  & \TO \\				
				Drone           & $\p_{\geq 0.95}$       & 4179    & 9414  & \highlight{1053}     & --      & --       &   \TO    &  \textbf{3} & 4 &  177 & 99 & 995  & 2212  & \TO \\			
				Drone-fm         & $\p_{\geq 0.90}$    &     32403    & 70099  & \highlight{12286}     &  --      & --       &   \TO    &  \textbf{93}  & 3 & 1179   & 13& \TO   & -- & \TO  \\
				Drone-fm         & $\p_{\geq 0.95}$    &     32403    & 70099  & \highlight{12286}     &  --      & --       &   \TO    &  \textbf{96}  & 4 & \TO   & --& \TO   & --  & \TO  \\
				Drone-fm         & $\p_{\geq 0.99}$    &     32403    & 70099  & \highlight{12286}     &  --      & --       &   \TO    &  \textbf{160}  & 20 & \TO   & -- & \TO   & -- & \TO  \\
			\end{tabular}
	}}
\end{table*}


\subsection{Results for Further Models}
\paragraph{Benchmarks}
We include the standard pMC benchmarks from the \param website \footnote{\url{http://www.avacs.org/tools/param/}}, which contain two parameters.
We furthermore have a rich selection of strategy synthesis problems obtained from partially observable MDPs (POMDPs)~\cite{DBLP:journals/corr/abs-1710-10294}.
The first example is a maze setting (Maze), introduced in~\cite{DBLP:conf/icml/McCallum93}.
The objective is to reach a target location in the minimal expected time in a maze.
The initial state is randomized, and the robot can only get an observation if it hits a wall.
We consider an example with five memory nodes.

The second example from~\cite{DBLP:journals/corr/abs-1710-10294} schedules wireless traffic over a network channel (Netw), where at each time a scheduler generates a new packet for each user at each time~\cite{yang2011real}. 
The scheduler has partial observability of the quality of the channels.
The main trade-off is to probabilistically schedule the traffic to the user with the best or worst perceived channel condition.

Finally, in the drone example (Drone), the objective is to compute a policy to arrive at a target location while avoiding an intruder. 
The intruder is only visible within a radius, and the controller has to take account of the partial information.
We consider a memoryless and a finite-memory policy with five memory for observations (Drone-fm) to show the trade-offs between the computation time and the reachability probability.
The pMDP benchmarks originate from the \param website, or as parametric variants to existing \prism case studies.

\paragraph{Results}

Similar to Table~\ref{tab:results_satellite}, Table~\ref{tab:results_pmc} contains an overview of the results for pMCs.
Table~\ref{tab:results_pmdp} additionally contains the number of actions (Actions) for pMDPs.


For the Maze example, PRISM-POMDP found a solution that satisfies the $6$ threshold in $1.1$ seconds, performing better than all methods considered.
However, PRISM-POMDP ran out of memory in the other POMDPs.
We observe the effect of including memory clearly in the Drone benchmark.
In the examples with memoryless strategies, both SCP and CCP methods can find solutions for lower thresholds much faster than with finite-memory strategies.
With a threshold of $0.99$, PSO, CCP and S$l^1$QP ran out of time before finding a solution.
On the other hand, only SCP can find a solution for the $0.99$ threshold with a finite-memory strategy before the time-out.

Specifically, in most of the considered examples, SCP outperforms other methods, especially on the models with a large number of parameters.
The empirical results demonstrate that in the examples where S$l^1$QP successfully finds a solution, it requires significantly more steps than SCP.
The reason is, the linearization in S$l^1$QP is only accurate on a very small region around the previous solution. 
The algorithm prevents it from taking a large step to ensure that the linearization is accurate in all iterations.
On the other hand, by incorporating model checking results, SCP can take larger steps for faster convergence while ensuring the soundness of the solutions using the model checking results.
On a few instances, such as Crowds and Network, CCP can outperform the other methods.
Similarly, on instances with $2$ parameters, PSO is the best method in terms of the performance.
Overall, \emph{we observe that the SCP method is significantly faster than PSO, S$l^1$QP and CCP on benchmarks with many parameters}.

We show the convergence of the CCP and SCP in Figures~\ref{fig:conv_netw}--\ref{fig:conv_wlan} on three benchmarks with different thresholds.
We show the expected cost in each iteration for both methods until they find a feasible solution.
In all of these examples, SCP requires fewer iterations compared to CCP to compute a feasible parameter instantiation.
The obtained expected cost for SCP is also less than CCP after each iteration.
We observe that the objective does not decrease for SCP for several iterations in each figure, due to rejecting the parameter instantiation and contracting the trust region for these iterations.
However, SCP can minimize the expected cost after sufficiently reducing the size of the trust region to approximate the nonconvex problem accurately.

Finally, solving the nonconvex QCQP using Gurobi yields a similar runtime with SCP and CCP in pMCs with two parameters.
However, Gurobi failed to find a solution for all pMCs obtained from POMDPs before the time limit.

\begin{table*}[t]
	\centering
	\caption{pMDP benchmark results}
	\label{tab:results_pmdp}
	\footnotesize{
		\scalebox{1.00}{
			\begin{tabular}{ll|rrrr|rrr|rr|rr|rr|rr}
				\multicolumn{2}{c|}{Problem} &	\multicolumn{4}{c|}{Info} & \multicolumn{3}{c|}{PSO} & \multicolumn{2}{c|}{SCP} & \multicolumn{2}{c|}{CCP} & \multicolumn{2}{c|}{S$l^1$QP} &  \multicolumn{1}{c}{GRB}\\
				Set & Spec   & States   & Actions &  Trans.  & Par.      & tmin   & tmax   & \textbf{tavg}  & \textbf{t}    & iter & \textbf{t}     & iter & \textbf{t}     & iter  & \textbf{t}  \\\hline
				
				BRP  & $\p_{\leq 0.1}$ &18369 & 18510 & 24950 & \highlight{2}  & 1 & 3 & \textbf{1} & 2 & 3 & 180  & 87 &  2  & 1 & 2 \\  
				Coin  &$\p_{\geq 0.9}$ & 22656 & 60544 & 75232 & \highlight{2} & -- & -- & \TO & \textbf{5}& 1 & 36  & 3 & 11 & 2  & \TO \\ 
				Coin  &$\p_{\geq 0.99}$ & 22656 & 60544 & 75232 & \highlight{2} & -- & -- & \TO & \textbf{5}& 1 & 170  & 13& 25 & 6  & \TO \\ 
				CSMA  & $\EV_{\leq 68.9}$ & 7958 & 7988 & 10594 & \highlight{26} & n.s. & n.s. & n.s. & \textbf{3} & 44 & 19  & 28 & 19  & 498 & \TO \\
				CSMA  & $\EV_{\leq 68.7}$ & 7958 & 7988 & 10594 & \highlight{26} & n.s. & n.s. & n.s. & 86  & 916 & \textbf{19}  & 35  & 22  & 673 &\TO \\
				CSMA  & $\EV_{\leq 68.4}$ & 7958 & 7988 & 10594 & \highlight{26} & n.s. & n.s. & n.s. & \TO & -- & \textbf{37}  & 58 & \TO & -- & \TO \\
				Virus  & $\EV_{\leq 8.1}$ & 761 & 2606 & 5009 & \highlight{18} &  14 & 22 & 16 & \textbf{1} & 3 & 2 & 2 & 504 & 1117 & \TO \\
				WLAN  & $\EV_{\leq 2450}$ & 2711 & 3677  & 4877 & \highlight{15} & n.s. & n.s. & n.s. & \textbf{2} &  16 & 18 & 33 & \TO & --& \TO \\
				WLAN  & $\EV_{\leq 2404}$ & 2711 & 3677  & 4877 & \highlight{15} & n.s. & n.s. & n.s. & \textbf{18} &  146 & \TO & -- & \TO & -- & \TO \\
				\hline
			\end{tabular}
	}}
\end{table*}

\begin{figure}[t]
	\centering
	\begin{tikzpicture}
	\begin{axis}[
	xlabel={Number of Iterations},
	ylabel={Expected Cost},
	grid=major,
	ymode=log,
	minor tick num=2,
	xmin=-5,
	xmax=102,
	height=4.0cm,
	width=8.5cm,
	extra y ticks={3.3},
	extra y tick labels={$3.3$},
		yticklabels ={$3.3$,$6$,$10^2$,$10^3$,$10^4$},
	label style={font=\bf},
	legend style={nodes={scale=1.2}}]
	\addplot  [blue,line width=0.85mm, mark=*,mark size=0.4pt] table [x=x,y=y] {data/network_ccp.dat};
	\addlegendentry{CCP}
	\addplot  [red,line width=0.85mm, mark=triangle,mark size=0.4pt] table [x=x,y=y] {data/network_scp.dat};
	\addlegendentry{SCP}
	\addplot[mark=none,dashed,line width=1.25mm,  gray, domain=-5:105, samples=2] {3.3};
	\end{axis}
	\end{tikzpicture}
	\caption{The obtained expected cost of SCP and CCP versus the number of iterations for the Network example with a threshold of $3.3$. SCP can find a feasible solution with fewer iterations than CCP on this example.}
	\label{fig:conv_netw}
\end{figure}
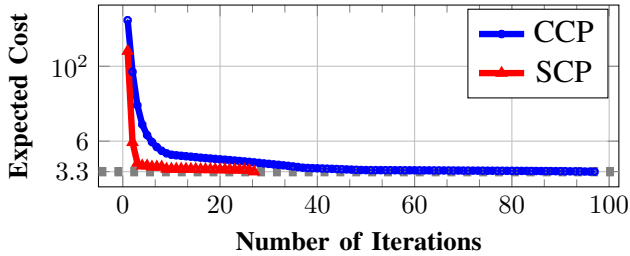

For the pMDP benchmarks, see Table~\ref{tab:results_pmdp}, we observe a similar pattern, where the SCP method is significantly faster than PSO and CCP in all but one benchmark.
For the CSMA benchmark, the SCP method fails to find a feasible solution for the  thresholds $68.7$ and $68.4$, whereas CCP can find a feasible solution in seconds.
In this benchmark, it takes $916$ iterations for SCP to find a solution that induces a cost less than $68.7$, and SCP converges to a locally optimal solution that induces a cost higher than $68.4$, therefore not satisfying the specification.
The benchmarks CSMA and WLAN are currently not supported by PSO due to the nonrectangular well-defined parameter space.
Similar to the pMC benchmarks, Gurobi was able to solve a pMDP with $2$ parameters at a similar time than CCP and SCP.
However, Gurobi failed to find a solution for other pMDPs within the time limit.

\begin{table}[t]
	\caption{The running time and number of iterations for the SCP method to find a feasible solution to the specification $\p_{\geq 0.95}$ for the Satellite-fm benchmark with different values of $\delta$ and $\gamma$.}
	\label{tab:hyperparams}
	\centering 
			\scalebox{1.0}{
	\begin{tabular}{|c|c|c|c|c|}
		\hline
		\diagbox{$\gamma$}{$\delta$}      & $0.5$ & $1.0$ & $1.5$ & $2.0$                  \\ \hline
		$1.25$ & $285,22$ & $280,21$ & $185,14$ & $176,12$ \\ \hline
		$1.5$  & $166,12$ & $109,7$  & $193,14$ & $239,18$ \\ \hline
		$1.75$ & $167,12$ & $152,11$ & $223,16$ & $239,18$ \\ \hline
		$2.0$  & $145,10$ & $121,11$ & $463,36$ & $361,27$ \\ \hline
	\end{tabular}}
\end{table}


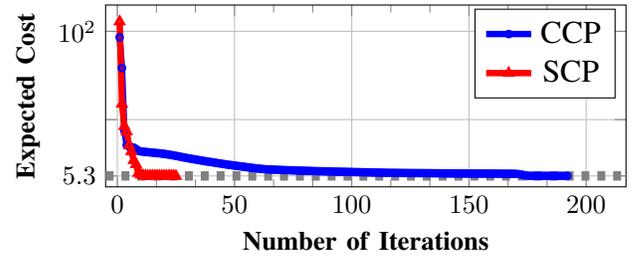
\begin{figure}[t]
	\centering
	\begin{tikzpicture}
	\begin{axis}[
	xlabel={Number of Iterations},
	ylabel={Expected Cost},
	grid=major,
	ymode=log,
	minor tick num=2,
	xmin=-5,
	xmax=217,
	ymin=3,
	height=4.0cm,
	width=8.5cm,
	extra y ticks={5.3},
	extra y tick labels={$5.3$},
	yticklabels ={$$,$$,$10^2$,$10^3$,$10^4$},
	label style={font=\bf},
	legend style={nodes={scale=1.2}}]
	\addplot  [blue,line width=0.85mm, mark=*,mark size=0.4pt] table [x=x,y=y] {data/maze_ccp.dat};
	\addlegendentry{CCP}
	\addplot  [red,line width=0.85mm, mark=triangle,mark size=0.4pt] table [x=x,y=y] {data/maze_scp.dat	};
	\addlegendentry{SCP}
	\addplot[mark=none,dashed,line width=1.25mm,  gray, domain=-5:217, samples=2] {5.3};
	\end{axis}
	\end{tikzpicture}
	\caption{The obtained expected cost of SCP and CCP versus the number of iterations for the Maze example with a threshold of $5.3$. SCP can find a feasible solution with fewer iterations than CCP on this example.}
	\label{fig:conv}
\end{figure}

\paragraph{Effect of values of hyperparameters $\delta$ and $\gamma$}
In Table~\ref{tab:hyperparams}, we report the running time and the number of iterations for the SCP method on finding a feasible solution with threshold $0.95$ for different values of the hyperparameters $\delta$ and $\gamma$.
In all of these examples, the running time and number of iterations for the SCP method vary for different values. 
For example, selecting $\delta=1.0$ and $\gamma=1.5$ gives the best performance, and the running time and the number of iterations are highest with $\delta=1.5$ and $\gamma=2.0$.
However, for all values of hyperparameters, SCP was able to compute a solution within a few minutes.

\paragraph{Effect of integrating model checking for CCP and SCP} 
Discarding the model checking results in our CCP implementation always yields time-outs, even for the relatively simple benchmark Maze with threshold $10$, which is solved with usage of model checking results within a minute.
Here, using model checking results thus yields a speed-up by a factor of at least 60.
For the Netw example, not using model checking increases the runtime by ten on average.
For the Drone examples that CCP solves before the timeout, we observe that CCP always exceeds time if we do not include the model checking results.

\emph{For SCP, we observed that model checking reduces the runtime of the procedure significantly in all examples and guarantees the solution's correctness.}
For the Drone examples, the size of the trust region becomes too small before reaching the required threshold without model checking, and SCP returns an infeasible solution for all thresholds.
On the other hand, we can observe that SCP can find a feasible solution within seconds or minutes if we include model checking.


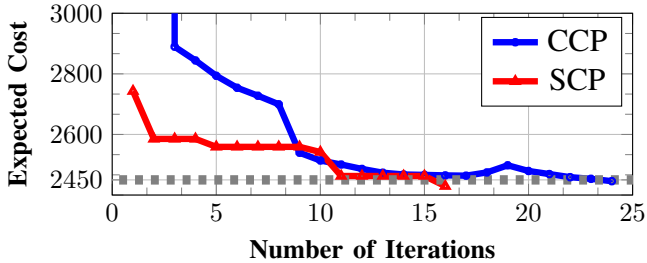
\begin{figure}[t]
	\centering
	\begin{tikzpicture}
	\begin{axis}[
	xlabel={Number of Iterations},
	ylabel={Expected Cost},
	grid=major,
	ymax=3000,
	ymin=2400,
	minor tick num=2,
	xmin=0,
	xmax=25,
	height=4.0cm,
	width=8.5cm,
	extra y ticks={2450},
	extra y tick labels={$2450$},
	yticklabels ={$$,$$,$2600$,$2800$,$3000$},
	label style={font=\bf},
	legend style={nodes={scale=1.2}}]
	\addplot  [blue, line width=0.85mm,mark=*,mark size=0.4pt] table [x=x,y=y] {data/wlan_ccp.dat};
	\addlegendentry{CCP}
	\addplot  [red, line width=0.85mm, mark=triangle,mark size=0.4pt] table [x=x,y=y] {data/wlan_scp.dat};
	\addlegendentry{SCP}
	\addplot +[mark=none,blue,line width=0.85mm] coordinates {(3, 2889.6303252624507) (3, 3000)};
	\addplot[mark=none,dashed,line width=1.25mm,  gray, domain=-5:217, samples=2] {2450};
	\end{axis}
	\end{tikzpicture}
	\caption{The obtained expected cost of SCP and CCP versus the number of iterations for the Wlan example with a threshold of $2450$. SCP can find a feasible solution with fewer iterations than CCP on this example.}
	\label{fig:conv_wlan}
\end{figure}
%
\subsection{Discussion}

The main results from the experiments is, \emph{a tuned variant of SCP improves the state-of-the-art.}
We observed that directly applying SCP methods does not yield a scalable method and may cause incorrect terminations.
To solve the nonconvex QCQP that originates from a pMDP efficiently, we needed to update parts of the model between each iteration instead of rebuilding. 
Additionally, Gurobi can use the results from the previous iterations, which improves the overall method's runtime.
Using state reduction techniques from model checking reduces the number of variables in each convex problem, which improves the runtime and the numerical stability of the procedure.
The critical ingredient is to incorporate model checking, which reduces the number of iterations significantly, as shown by the comparison with the S$l^1$QP algorithm.
It also allows a sound termination criterion, which cannot be realized by directly applying SCP methods as the linearized problem is not an over-approximation of the original QCQP.

These improvements to SCP yield a procedure that can handle much bigger problems than existing POMDP solvers. 
Our SCP method is also superior to sampling-based approaches in problems with many parameters.
It also significantly outperforms CCP~\cite{cubuktepe2018synthesis} in almost all examples considered.
Benchmarks with many parameters pose two challenges for sampling-based approaches: 
Sampling is necessarily sparse due to the high dimension, and optimal parameter valuations for one parameter often depend significantly on other parameter values.

\section{Conclusion and Future Work}\label{sec:conclusion}
We studied the applicability of convex optimization for parameter synthesis of parametric Markov decision processes (pMDPs). 
To solve the underlying nonconvex optimization problem efficiently, we proposed two methods that combine techniques from convex optimization and formal methods.
The experiments showed that our methods significantly improve the state-of-the-art and can solve large-scale satellite collision avoidance problems.
In the future, we will investigate how to handle the case if some of the parameters cannot be controlled and may be adversarial.
We will also apply our techniques to solving uncertain partially observable MDPs with different uncertainty structures.

\bibliographystyle{IEEEtran}
\bibliography{literature}
\iftoggle{TR}{
\appendix

\subsection{Proof of Theorem~\ref{thm:main}}

\begin{proof}
\label{sec:Proof_thm}

Since the assignment of the convexified DC problem in~\eqref{eq:min_qcqp_ccp} -- \eqref{eq:slackvariable_ccp} is feasible with 
	\begin{align*}
		\tau\sum_{\forall s\in S\setminus T}k_s=0,		
	\end{align*}
we know that the assignment is feasible for the QCQP in~\eqref{eq:min_mdp} -- \eqref{eq:probcomputation_mdp}. 
We will show that for every $s \in S$ we have $\pr(\mdp[\mathbf{v}],\finally T,s)\leq p_s$, for any feasible assignment for the QCQP in~\eqref{eq:min_mdp} -- \eqref{eq:probcomputation_mdp}.

For $s \in S$,  define $q_s = \pr_s(\mdp[\mathbf{v}],\finally T)$ (the probability to reach $T$ from $s$ in $\mdp[\mathbf{v}]$) and $x_s = q_s - p_s$. Let $S_{<}  = \{s \in S \mid p_s < q_s\}$.

For states $s \in T$ we have, by~\eqref{eq:targetprob_mdp} that $p_s=1=q_s = 1$, meaning that $s \not \in S_<$. For states $s$ with $q_s = 0$, \ie, states from which $T$ is almost surely not reachable, we have trivially $p _s \geq q_s$, also implying $s \not \in S_<$. Therefore,
for every $s \in S_<$, $p_s$ satisfies~\eqref{eq:probcomputation_mdp} and $T$ is reachable with positive probability.

Assume, for the sake of contradiction, that $S_{<} \neq \emptyset$, and let $x_{max}  =  \max\{x_s \mid s \in S\},$ and  $S_{max}  = \{s \in S \mid x_s  = x_{max}\}.$

The assumption that $S_< \neq \emptyset$ implies $x_{max} > 0$. Let $s \in S$ be such that $x_s = x_{max}$. Therefore, $s \in S_<$, and thus for all $\act \in \ActS(s)$, we have that 
 \begin{align}
 p_s \geq  \displaystyle \sum_{s'\in S}	\probmdp(s,\act,s')\cdot p_{s'}.
 \end{align}
On the other hand, there exists an $\act \in \ActS(s)$ such that 
 \begin{align}
\displaystyle q_s =\displaystyle \sum_{s'\in S} \probmdp(s,\act,s')\cdot q_{s'}.
 \end{align}
Thus, 
 \begin{align}
\displaystyle q_s-p_s & \leq \displaystyle \sum_{s'\in S} \probmdp(s,\act,s')\cdot (q_{s'}-p_{s'}), \label{eq:proofineq}
 \end{align}
which is equivalent to
 \begin{align}
\displaystyle x_s  \leq  \displaystyle \sum_{s'\in S} \probmdp(s,\act,s')\cdot x_{s'}.
 \end{align}
Since for all $\act \in \ActS(s)$, and $s' \in S$, we have that $\probmdp(s,\act,s') \geq 0$ and $\displaystyle \sum_{s'\in S} \probmdp(s,\act,s') = 1$, using~\eqref{eq:proofineq} we establish
 \begin{align}
x_{max} = x_s & \leq  \displaystyle \sum_{s'\in S} \probmdp(s,\act,s')\cdot x_{s'}\\
&\leq \displaystyle \sum_{s'\in S} \probmdp(s,\act,s')\cdot x_{max}\\
&\leq x_{max} \displaystyle \sum_{s'\in S} \probmdp(s,\act,s')= x_{max}.
 \end{align}
Which implies that all the inequalities are equalities, meaning 
 \begin{align}
x_{max} = x_s & =  \displaystyle \sum_{s'\in S} \probmdp(s,\act,s')\cdot x_{s'}\\
&= x_{max} \cdot \displaystyle \sum_{s'\in S} \probmdp(s,\act,s').\label{eq:proofeq}
 \end{align}
The equation in~\eqref{eq:proofeq} gives us that $x_{s'} = x_{max} > 0$ for every successor $s'$ of $s$ in $\mdp[\mathbf{v}]$. Since $s \in S_{max}$ was chosen arbitrarily, for every state $s \in S_{max}$, all successors of $s$ are also in $S_{max}$. As we established that $S_< \cap T = \emptyset$, it is necessary that $T$ is not reachable with positive probability from any $s \in S_{max}$, which is a contradiction with the fact that from every state in $S_<$, the set $T$ is reachable.
\end{proof}%
\vspace{-0.1cm}

\subsection{Proof of Theorem~\ref{thm:kkt}}
\begin{proof}
		We show that the NLP in \eqref{eq:min_mdp} -- \eqref{eq:probcomputation_mdp} satisfies linear independence constraint qualification, which states that the gradients of the active inequality and equality constraints are linearly independent at a locally optimal solution.
		\added{Without loss of generality, we assume that the pMDP is \emph{simple}, i.e., $\probmdp(s,\act,s')\in \lbrace{v,1-v | v \in V} \cup R_{[0, 1]}\rbrace$.
			
			First, the constraints \eqref{eq:well-defined_probs_mdp}--\eqref{eq:well-defined_probs_mdp1} reduce to the constraints $$\epsgraph \leq v \leq 1-\epsgraph \quad \forall v \in V,$$ and the gradients of the active constraints would therefore be independent at any solution $\forall v^*\in V$.   
			
			Second, we now focus on the constraints~\eqref{eq:probcomputation_mdp}.
			This constraint is affine in the probability variables $p_s$ for $s \in S\setminus T$ for a fixed value of the parameters in $V$.
			Further, it is known that for any value of the parameters, the solution for the probability variables $p_s$ for $s \in S\setminus T$ instantiated MDP is unique~\cite[Theorem 10.19, Theorem 10.100]{BK08}, after a preprocessing step removing all states $s \in S$ that reach a target state $t \in T$ with probability $0$ and $1$ using Algorithm 46 in~\cite{BK08}. Note that this preprocessing step also removes the constraint~\eqref{eq:targetprob_mdp}.
			
			Furthermore, the solution $\mathbf{p}^*$ for the probability variables can be obtained by solving the equation system $\mathbf{p}=\mathbf{P}\mathbf{p}+\mathbf{b},$}where $\mathbf{p}=(p_s)_{\forall s \in S\setminus T}$, i.e., the probability variables, the matrix $\mathbf{P}$ contains the transition probabilities for the states in $S\setminus T$ for the optimal deterministic strategy $\sched(s)$, i.e., $\mathbf{P}=(\probmdp(s,\sched(s),s'))_{\forall s, s' \in S\setminus T}$, and the vector $\mathbf{b}=(b_s)_{\forall s \in S\setminus T}$ contains the probabilities of reaching a state in the target set $T$ from any state $s \in S\setminus T$ within one step under the optimal action, i.e., $b_s=\sum_{t \in T}\probmdp(s,\sched(s),t)$.
		This equation system correspond to the active inequality constraints in~\eqref{eq:probcomputation_mdp}, which cam be written as 
		\begin{align*}
		p^*_s = \sum_{s'\in S}	\hat{\probmdp}(s,\sched(s),s')\cdot p^*_{s'}\quad	\forall s\in S\setminus T,
		\end{align*}
		where $\hat{\probmdp}$ denote the transition function for the instantiated pMDP, and $p^*_s$ is the unique solution for the probability variable at state $s$.
		Note that the solution for the probability variables is unique, and the above equation system is affine in the probability variables. 
		Therefore, the gradients of the above active inequality constraints are linearly independent at their unique solution $p_s^*$ for the probability variables $p_s$.
		
		Combining the two facts, we conclude that the NLP in \eqref{eq:min_mdp} -- \eqref{eq:probcomputation_mdp} satisfies the linear independence constraint qualification for any locally optimal solution.
\end{proof}

\subsection{Convergence of SCP Methods for Parameter Synthesis}

In this section, we show the convergence of a SCP method that is a variant of Fig.~\ref{fig:scpwithmc}. 
Similar to Section~VI-c, we denote our variable pair $(\mathbf{v}, \mathbf{p})$ by $x$ in this section.
Let the objective in~\eqref{eq:min_mdp} be $h_0(x)$, the inequality constraints in~\eqref{eq:probcomputation_mdp} be
\begin{align*}
h_i(x)\leq 0, \quad i=1,\ldots,m
\end{align*}
where $m$ is the number of inequality constraints.
As the other constraints in the QCQP~\eqref{eq:min_mdp}--\eqref{eq:probcomputation_mdp} are convex, we compactly represent the constraints in~\eqref{eq:targetprob_mdp}--\eqref{eq:probthreshold_mdp} as
\begin{align*}
x \in Q.
\end{align*}
Note that the convex set $Q$ is compact, which is a required assumption for the convergence analysis of SCP methods.

{\color{black}
\begin{algorithm}[t]
	\begin{algorithmic}[1]
		\Statex Initialize: Select $x_0 \in Q, \beta_0, \delta >0.$
		\State \textbf{Step 1} At each iteration $l$, solve the convex problem~\eqref{eq:appendix_obj}--\eqref{eq:appendix_cons} that is convexified around $x_l$ to compute a unique solution $x_{l+1}$.
		\State \textbf{Step 2}
		\If {$ h_i(x_l) +\nabla h_i (x_l)^{\top}(x-x_l)\leq 0$ for $i=1,\ldots,m$} 
		\State Accept this step, and set $\beta_{l+1}=\beta_l$.
		\Else
		\State Reject this iteration, set $x_{l+1}\leftarrow x_l$, and $\beta_{l+1} \leftarrow \beta_l + \delta.$
		\EndIf
		\caption{Sequential convex programming with regularization} 
		\label{alg:scp_app}
	\end{algorithmic}
\end{algorithm}}

\begin{remark}
We remark that we do not include the threshold constraint in~\eqref{eq:probthreshold_mdp}, as SCP methods require an initial feasible point for convergence analysis.
Such an initial feasible point can be obtained by a model checking step with any well-defined parameter instantiations without the threshold constraints.
\end{remark} 

We also note that the functions $h_i(x)$ are $C^2$ with Lipschitz continuous gradients as they are quadratic. 
For each $i=0,1,\ldots,m$, we denote by $L_i>0$ the Lipschitz constants of $\nabla h_i$.

At each iteration $l$, we solve the following convex problem with variables $x$ and $k$:
\begin{align}
\text{minimize} & \;\; h_0(x)+\beta_l k + \dfrac{\mu + \beta_l\mu'}{2}\Vert x_l - x\Vert^2_2\label{eq:appendix_obj}\\
\text{subject to} & \;\; h_i(x_l) +\nabla h_i (x_l)^{\top}(x-x_l)\leq k,\; i=1, \ldots, m,\\
& \;\; k \geq 0,\\
&\;\; x \in Q.\label{eq:appendix_cons}
\end{align} 
where $\mu > L_0$ and $\mu' \geq \max_{i=1,\ldots,m} L_i, \beta_l>0.$ 
We show the SCP method in Algorithm~\ref{alg:scp_app}.

We now state the convergence result of Algorithm~\ref{alg:scp_app}.

\begin{theorem}[]\cite[Theorem 2.2, Theorem 2.3]{bolte2016majorization}.
	The sequence $\lbrace x_l \rbrace$ generated by Algorithm~\ref{alg:scp_app}. converges to a feasible point $x_{\infty}$ that satisfies the KKT conditions in Theorem~\ref{thm:kkt} for the QCQP in~\eqref{eq:min_mdp}--\eqref{eq:well-defined_probs_mdp1} and \eqref{eq:probcomputation_mdp}.
	Furthermore, the convergence rate is in the form
	\begin{align*}
	&\Vert x_k - x_{\infty} \Vert_2 = O(q^k), \text{ with } q \in (0 ,1),\\
	&	\Vert x_k - x_{\infty} \Vert_2 = O(1/k^\xi), \text{ with } \xi >0.
	\end{align*}
\end{theorem}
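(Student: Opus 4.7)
The plan is to establish three ingredients in sequence: (a) well-definedness of each subproblem, i.e., existence and uniqueness of $x_{l+1}$; (b) a sufficient-decrease property for a suitable merit function; and (c) the Kurdyka--\L ojasiewicz (KL) property of the merit function, which will yield both asymptotic convergence to a KKT point and the asserted rates. The overall structure mirrors the majorization-minimization framework of \cite{bolte2016majorization}, which is directly applicable here since all problem data $h_0, h_1, \ldots, h_m$ are quadratic (hence $C^2$ with globally Lipschitz gradients) and $Q$ is compact and convex.

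For (a), observe that the regularization term $\tfrac{\mu+\beta_l\mu'}{2}\Vert x_l - x\Vert_2^2$ makes the subproblem objective in \eqref{eq:appendix_obj} strongly convex in $x$, the constraints are affine in $(x,k)$ together with the convex compact set $x\in Q$, so a unique minimizer $(x_{l+1}, k_{l+1})$ exists. For (b), I would introduce the merit function $P_l(x) \colonequals h_0(x) + \beta_l \max_{i}\{h_i(x), 0\}_+$ and use the descent lemma: for any function with $L$-Lipschitz gradient one has $h_i(x) \leq h_i(x_l) + \nabla h_i(x_l)^\top(x-x_l) + \tfrac{L_i}{2}\Vert x - x_l\Vert_2^2$. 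Since $\mu > L_0$ and $\mu' \geq \max_i L_i$, the surrogate objective in \eqref{eq:appendix_obj}--\eqref{eq:appendix_cons} globally majorizes $P_l$. Combining this majorization with the optimality of $x_{l+1}$ for the surrogate and the strong convexity constant, one obtains the inequality
\begin{equation*}
P_{l+1}(x_{l+1}) \leq P_l(x_l) - \frac{\mu - L_0}{2}\Vert x_{l+1} - x_l\Vert_2^2,
\end{equation*}
provided the penalty stabilizes, i.e., $\beta_l$ stays bounded. The step-rejection rule guarantees that $\beta_l$ grows only finitely often: once $\beta_l$ exceeds the Lagrange multipliers (Theorem~\ref{thm:exact}), the linearized constraints are satisfied at $x_{l+1}$ and further increases halt. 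Telescoping the descent inequality, together with lower boundedness of $P$ on the compact set $Q$, yields $\sum_{l}\Vert x_{l+1}-x_l\Vert_2^2 < \infty$ and hence $\Vert x_{l+1}-x_l\Vert_2 \to 0$.

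Subsequential convergence of $\{x_l\}$ to some $x_\infty \in Q$ follows from compactness of $Q$. To show $x_\infty$ satisfies the KKT conditions of the original QCQP, I would pass to the limit in the KKT conditions of the subproblem: the linearized gradients $\nabla h_i(x_l)^\top$ converge to $\nabla h_i(x_\infty)^\top$ by continuity, the regularization term vanishes since $\Vert x_{l+1}-x_l\Vert_2 \to 0$, and feasibility $h_i(x_\infty)\le 0$ follows because the rejection rule would otherwise keep firing. Linear independence constraint qualification is provided by Theorem~\ref{thm:kkt}, so the limiting multipliers are well-defined.

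For the rates, the key observation is that all $h_i$ are polynomial and hence real-analytic and semi-algebraic; the merit function $P$ therefore satisfies a KL inequality with desingularizing function $\varphi(s) = c s^{1-\theta}$ for some KL exponent $\theta \in [0,1)$. Standard arguments from Attouch--Bolte--Svaiter applied to sequences with the sufficient-decrease property give $\Vert x_k - x_\infty\Vert_2 = O(q^k)$ when $\theta \in [0,1/2]$ and $\Vert x_k - x_\infty\Vert_2 = O(k^{-\xi})$ with $\xi = (1-\theta)/(2\theta-1)$ when $\theta \in (1/2, 1)$. The main obstacle I anticipate is the bookkeeping around the penalty update: one must prove that $\beta_l$ stabilizes in finitely many iterations to keep the merit function $P_l$ eventually constant in $l$, since the KL machinery applies to a fixed function. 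This boils down to using boundedness of the multipliers (from LICQ on the compact set $Q$) together with the exactness bound of Theorem~\ref{thm:exact}.
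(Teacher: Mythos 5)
The paper does not prove this theorem at all: it is imported verbatim from the cited reference (Theorems~2.2 and~2.3 of the Bolte--Pauwels majorization--minimization paper), and the paper's only actual work is the surrounding setup --- verifying that the hypotheses hold, namely that the $h_i$ are quadratic and hence $C^2$ with globally Lipschitz gradients, that $Q$ is convex and compact, and that LICQ holds via Theorem~\ref{thm:kkt}. Your proposal instead reconstructs the internals of that cited proof, and the reconstruction is faithful to how the result is actually established: strong convexity of the regularized subproblem for well-posedness, the descent-lemma majorization of the exact penalty function using $\mu > L_0$ and $\mu' \geq \max_i L_i$, finite stabilization of $\beta_l$ via the exactness/bounded-multiplier argument, and the Kurdyka--\L ojasiewicz property of semi-algebraic (here polynomial) data to obtain the exponent dichotomy behind the two stated rates. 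What the paper's route buys is brevity and a clean delegation of the analysis; what yours buys is a self-contained argument that makes explicit exactly which structural features of the parameter-synthesis QCQP are being used.

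One point to tighten: telescoping the sufficient-decrease inequality gives only $\sum_l \Vert x_{l+1}-x_l\Vert_2^2 < \infty$, which yields $\Vert x_{l+1}-x_l\Vert_2 \to 0$ and, with compactness of $Q$, \emph{subsequential} convergence --- not convergence of the whole sequence as the theorem asserts. The upgrade to full-sequence convergence is not a consequence of compactness; it comes from the finite-length property $\sum_l \Vert x_{l+1}-x_l\Vert_2 < \infty$, which is itself a product of the KL argument you invoke later only for the rates. You should therefore route the convergence claim through the KL step as well, rather than presenting KL as an add-on for the rate estimates. With that reordering, and the penalty-stabilization bookkeeping you already flag, the argument is complete.
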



\vspace{-1.1cm}
}

\begin{IEEEbiography}[{\includegraphics[width=1.0in,height=1.25in,clip,keepaspectratio]{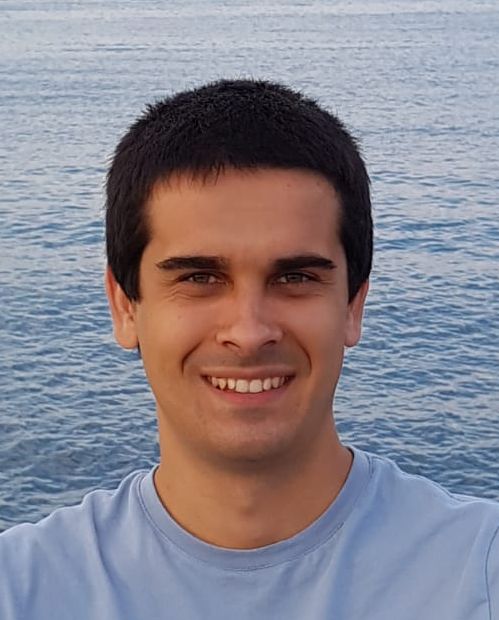}}]{Murat Cubuktepe} joined the Department of Aerospace Engineering at the University of Texas at Austin as a Ph.D. student in Fall 2015. He received his B.S degree in Mechanical Engineering from Bogazici University in 2015. His research is on the theoretical and algorithmic aspects of the design and verification of autonomous systems in the intersection of formal methods, convex optimization, and artificial intelligence.
\end{IEEEbiography}
\vspace{-1.1cm}
\begin{IEEEbiography}[{\includegraphics[width=1.0in,height=1.25in,clip,keepaspectratio]{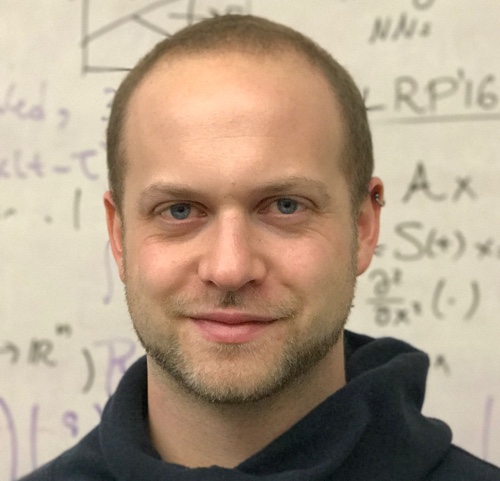}}]{Nils Jansen} is an assistant professor with the Institute for Computing and Information Science (iCIS) at the Radboud University, Nijmegen, The Netherlands. He received his Ph.D. in computer science with distinction from RWTH Aachen University, Germany, in 2015. Prior to Radboud University, he was a postdoctoral researcher and research associate with the Institute for Computational Engineering and Sciences at the University of Texas at Austin. His current research focuses on formal reasoning about safety aspects in machine learning and robotics. At the heart is the development of concepts inspired from formal methods to reason about uncertainty and partial observability.
\end{IEEEbiography}
\vspace{-1.1cm}
\begin{IEEEbiography}[{\includegraphics[width=1.0in,height=1.25in,clip,keepaspectratio]{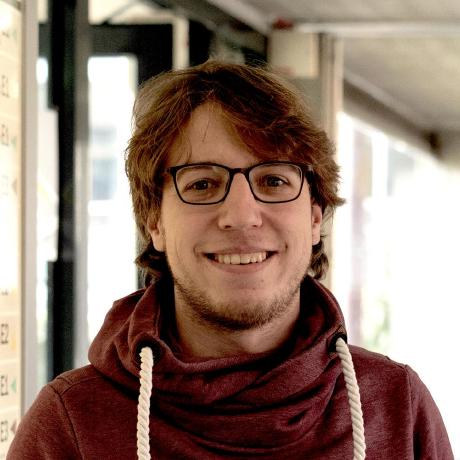}}]{Sebastian Junges} is a postdoctoral researcher at the University of Berkeley, California. In 2020, he received his PhD degree with distinction from RWTH Aachen University, Germany. His research focuses on the model-based analysis of controllers for uncertain environments, either applied for runtime assurance or as part of the design process. In his research, he applies and extends ideas from formal methods, in particular from satisfiability checking and probabilistic model checking.
\end{IEEEbiography}
\vspace{-1.2cm}
\begin{IEEEbiography}[{\includegraphics[width=1.0in,height=1.25in,clip,keepaspectratio]{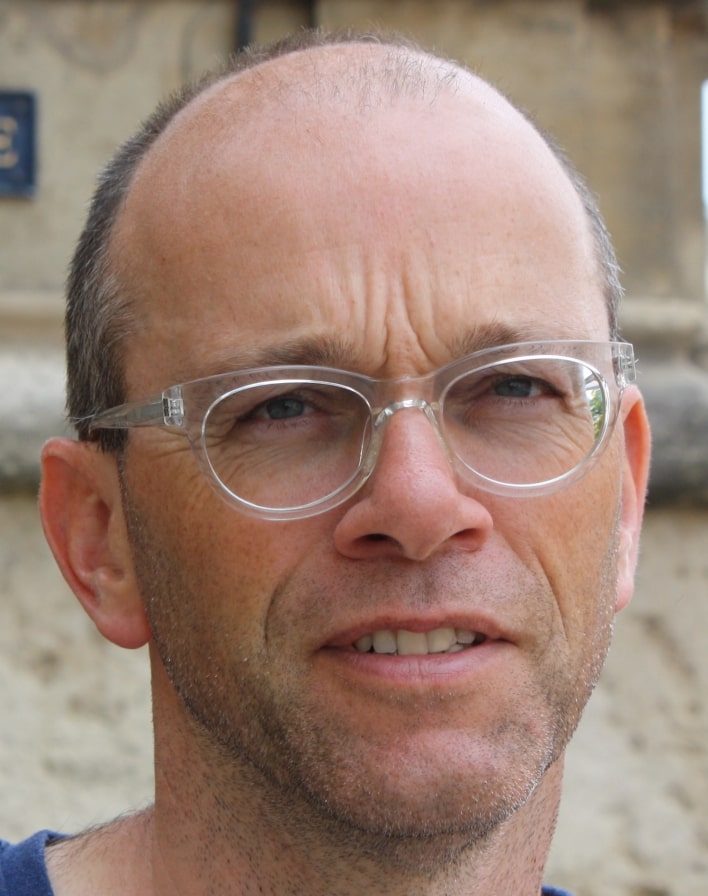}}]{Joost-Pieter Katoen} is a Distinguished Professor with RWTH Aachen University, Germany, and holds a parttime professorship at the University of Twente, The Netherlands. He received a honorary doctorate degree from Aalborg University, Denmark. His research interests include formal methods, model checking, concurrency theory, and probabilistic computation. He coauthored more than 180 conference papers, 75 journal papers, and the book ``Principles of Model Checking.'' Prof. Katoen is the Chairman of the steering committee of ETAPS, and steering committee member of the conferences CONCUR, QEST, and FORMATS. He is a member of Academia Europaea and holds an ERC Advanced Research Grant (2017).
\end{IEEEbiography}
\vspace{-1.2cm}
\begin{IEEEbiography}[{\includegraphics[width=1.0in,height=1.25in,clip,keepaspectratio]{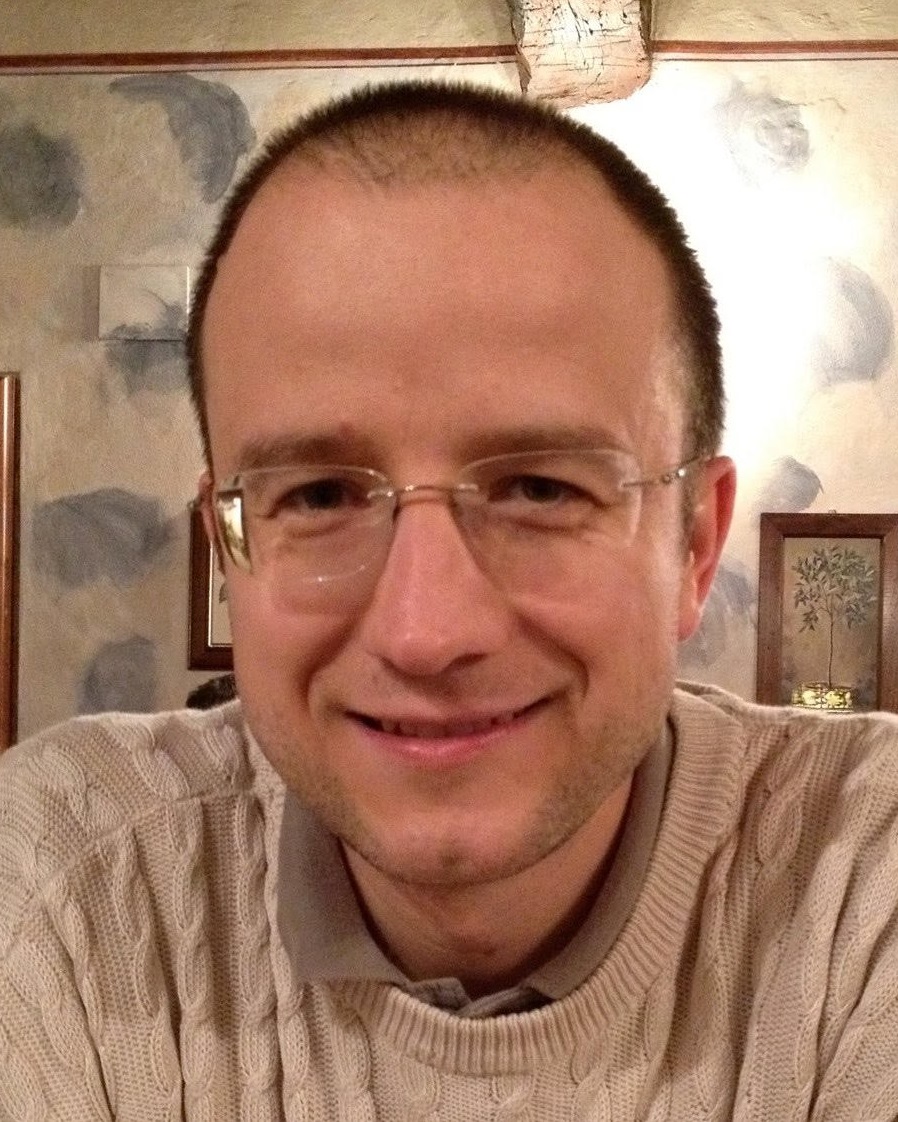}}]{Ufuk Topcu} joined the Department of Aerospace Engineering at the University of Texas at Austin as an assistant professor in Fall 2015. 
	He received his Ph.D. degree from the University of California at Berkeley in 2008. 
	He held research positions at the University of Pennsylvania and California Institute of Technology. 
	His research focuses on the theoretical, algorithmic and computational aspects of design and verification of autonomous systems through novel
	connections between formal methods, learning theory and controls.
\end{IEEEbiography}

\end{document}